\def\longrightharpoonup{\relbar\joinrel\rightharpoonup}
\def\longleftharpoondown{\leftharpoondown\joinrel\relbar}
\def\longrightleftharpoons{
  \mathop{
    \vcenter{
       \hbox{
       \ooalign{
          \raise1pt\hbox{$\longrightharpoonup\joinrel$}\crcr
  	  \lower1pt\hbox{$\longleftharpoondown\joinrel$}
	}
      }
    }
  }
}
\newtheorem{remark}{Remark}
\begin{document}

\title{{Asymptotic analysis of Multiscale Markov chain}}
\date{}
\author{Wei Zhang\textsuperscript{1}}
\footnotetext[1]{Institute of Mathematics, Free University of Berlin,
Arnimallee 6, 14195 Berlin, Germany. Email : wei.zhang@fu-berlin.de}
\maketitle
\begin{abstract}
    We consider continuous-time Markov chain on a finite state space $X$.
    We assume $X$ can be clustered into several subsets such that
    the intra-transition rates within these subsets
    are of order $\mathcal{O}(\frac{1}{\epsilon})$
    comparing to the inter-transition rates among them, where $0 < \epsilon \ll 1$. 
    Several asymptotic results are obtained as $\epsilon \rightarrow 0$ concerning the convergence of Kolmogorov
    backward equation, Poincar\'e constant, (modified) logarithmic Sobolev constant to
    their counterparts of certain reduced Markov chain. Both reversible and
    irreversible Markov chains are considered.
\end{abstract}
\begin{keywords}
  multiple time scale, continuous-time Markov chain, asymptotic analysis, 
 Poincar\'e constant, logarithmic Sobolev constant. 
\end{keywords}
\begin{AMS}
  60J27, 34E13, 34E05
\end{AMS}

\section{Introduction}
\label{sec-intro}
\subsection{Multiscale Markov chain}
\label{subsec-setup}
In recent decades, Markov chains have been 
intensively investigated due to their effectiveness in modeling systems
arising from biology, physics, economics et al.  \cite{mc-stability, mc-mixing, norris-mc}.  
Inspired by new phenomena from these disciplines, new topics related to Markov
chains are continuously emerging and attracting researchers' attentions.
Metastability in Markov chains is one such interesting topic which tries to understand systems'
behaviors on large time scales by eliminating systems' oscillations on
short time scales and identifying certain effective dynamics on large time scales
\cite{msm_gen_valid, Deuflhard200039}.

In this work, we consider a continuous-time Markov chain $\mathcal{C}$ on finite state space $X = \{x_1, x_2, \cdots, x_n\}$.
We assume $\mathcal{C}$ is irreducible and therefore has a unique invariant
measure \cite{mc-mixing, norris-mc}. 
Suppose state space $X$ can be clustered
into $m$ ($m > 1$) nonempty disjoint subsets $X_1, X_2, \cdots, X_m$, with $|X_i| =
n_i > 0$, $\sum\limits_{i=1}^{m} n_i = n$.
We will be interested in the situation when transitions of system's states
within the same subset occur much more
frequently than transitions between states belonging to different subsets. Precisely, let
$n\times n$ matrix $Q$ be the infinitesimal
generator of Markov chain $\mathcal{C}$, which we assume can be written as
\begin{align}
 Q = \frac{1}{\epsilon}Q_0 + Q_1\,, 
 \label{q-form}
\end{align} 
 for some parameter $0 < \epsilon \ll 1$. Matrices $Q_0$ and
$Q_1$ satisfy that 
\begin{enumerate}
  \item
    $Q_0(x,y) \ge 0, \, Q_1(x,y) \ge 0$, ~if $x \neq y$.
  \item
    Each row sum of their entries equals zero, i.e.
    \begin{align*}
       \sum_{y \in X} Q_0(x,y) = \sum_{y\in X} Q_1(x,y) = 0\,, \quad \forall x \in
       X\,.
    \end{align*}
\item
$Q_0(x,y) = 0$, ~if $x \in X_i$, $y \in X_j$ for some $1 \le i \neq j\le m$.
\item
$Q_1(x,y) = 0$, ~if $x,y \in X_i$ for some $1 \le i \le m$ and $x \neq y$. 
\end{enumerate}
That is,
$Q_0$
and $Q_1$ describe the intra- and inter-transition rates among subsets $X_i$,
respectively.
Notice that comparing to Chapter~$5$ and~$9$ of \cite{pavliotis2008multiscale}, 
our setting is more general in that each subset may contain different number of
states and transitions between states are less restrictive. From the above
assumptions, we can rearrange states in $X$ such that 
\begin{align}
  Q_0 = \mbox{diag}\{Q_{0,1}, Q_{0,2}, \cdots, Q_{0,m}\}\,
  \label{q0-block}
\end{align}
  is a
block diagonal matrix consisting of $m$ submatrices $Q_{0,i}$, $1 \le i \le
m$, where $Q_{0,i}$ is an $n_i \times n_i$ matrix and defines a Markov chain
$\mathcal{C}_i$ on subset $X_i$.
We further assume that for each $1 \le i \le m$, Markov chain $\mathcal{C}_i$ is irreducible and therefore has a unique invariant measure $\pi_i$. 
Let $\pi^\epsilon$ be the unique invariant measure of Markov chain
$\mathcal{C}$ on $X$.
Given $1 \le i, j \le m$, we define
\begin{align}
  \bar{Q}(i,j) = \sum_{x \in X_i, y \in X_j} Q_1(x,y) \pi_i(x)\,. 
  \label{q-bar}
\end{align}
It is direct to verify that matrix $\bar{Q}$ in (\ref{q-bar}) defines an infinitesimal
generator (non-negative
off-diagonal elements with zero row sums) of Markov chain $\bar{\mathcal{C}}$
on $\bar{X} = \{1,2, \cdots, m\}$. We will call $\bar{\mathcal{C}}$ the
reduced Markov chain and assume it has a unique invariant measure $w$. 

The main aim of this paper is to consider several objects associated
with Markov chain $\mathcal{C}$ and their counterparts associated with Markov
chain $\bar{\mathcal{C}}$. For this purpose, we first introduce
the Kolmogorov backward equations 
\begin{align}
  \begin{split}
    \frac{d}{dt} \rho_t =  Q\rho_t = \big(\frac{1}{\epsilon} Q_0 + Q_1\big)
    \rho_t\,, \qquad  
    \frac{d}{dt} \bar{\rho}_t =  \bar{Q}\bar{\rho}_t \,, 
  \end{split}
    \label{fp-ms-intro} 
\end{align}
where $\rho_t : X \rightarrow \mathbb{R}$ and $\bar{\rho}_t : \bar{X}
\rightarrow \mathbb{R}$, $t \ge 0$.
These equations play an important role in understanding the dynamical
behaviors
of Markov chain $\mathcal{C}$ and $\bar{\mathcal{C}}$ \cite{norris-mc, pavliotis2008multiscale}.

We will also consider constants characterizing the speed of Markov
chain converging to equilibrium \cite{gross-lsi, Ledoux04spectralgap, ledoux-concetration-lsi}. 
Let $\mathbf{E}_{\pi^\epsilon}$,
  $\mbox{Var}_{\pi^\epsilon}$ denote the expectation and variance with respect
  to measure $\pi^\epsilon$ respectively. First recall the definition of Poincar\'e constant
and logarithmic Sobolev constant for Markov chain $\mathcal{C}$, which are defined as
\begin{align}
    &  \lambda_\epsilon =
  \inf_f\Big\{\frac{\mathcal{E}_\epsilon(f,f)}{\mbox{Var}_{\pi^\epsilon} f} 
  ~\Big|~\mbox{Var}_{\pi^\epsilon} f> 0\,,\, f : X \rightarrow \mathbb{R}\Big\} \label{poincare-const}
  \\
  &  \alpha_\epsilon =
  \inf_{f}\Big\{\frac{\mathcal{E}_\epsilon(f,f)}{\mbox{Ent}_{\pi^\epsilon} (f^2)}\,~\Big|~
  \mbox{Ent}_{\pi^\epsilon}(f^2) > 0\,,\, f : X \rightarrow \mathbb{R}\Big\}\,,
  \label{log-sobolev-const}
\end{align}
where the infima are taken among all non-constant functions, 
$\mathcal{E}_\epsilon$, $\mbox{Ent}_{\pi^\epsilon}$ are the Dirichlet form and
relative entropy with respect to $\pi^\epsilon$, defined as 
\begin{align}
  \mathcal{E}_\epsilon(f,g) =& -\langle f, Qg\rangle_{\pi^\epsilon} = - \Big\langle  f,
  (\frac{Q_0}{\epsilon} + Q_1)g\Big\rangle_{\pi^\epsilon}\,,\quad  f,\, g : X \rightarrow \mathbb{R}\,,\\
 \mbox{Ent}_{\pi^\epsilon}(f) =& \sum_{x \in X} f(x) \,\ln
  \frac{f(x)}{\mathbf{E}_{\pi^\epsilon}
  f}\, \pi^\epsilon(x)\,, \quad f : X \rightarrow \mathbb{R}^+\,.
\end{align}
For function $f : X \rightarrow \mathbb{R}$, we have
\begin{align}
  \mathcal{E}_\epsilon(f,f) =&  \frac{1}{2\epsilon} \sum_{i=1}^m \sum_{x,x' \in X_i}
(f(x') - f(x))^2 Q_{0,i}(x,x') \pi^\epsilon(x) \notag \\
& + \frac{1}{2} \sum_{i\neq j} \sum_{x\in X_i,\, y \in X_j} (f(y) -
f(x))^2 Q_1(x,y) \pi^\epsilon(x)\,,
\label{dirichlet-form-f-f-multiscale} 
\end{align}
which holds in both reversible and non-reversible case \cite{log-sob-diaconis}.

The modified logarithmic Sobolev constant is defined as
\begin{align}
  \gamma_\epsilon = \inf_{f} \Big\{ \frac{\mathcal{E}_\epsilon(f, \ln
  f)}{\mbox{Ent}_{\pi^\epsilon}(f)}~\Big|~ \mbox{Ent}_{\pi^\epsilon}(f) > 0\,, f : X \rightarrow \mathbb{R}^+ \Big\}\,,
    \label{mlsi-def}
\end{align}
where the infimum is taken among all non-constant and non-negative functions.
 It is known that these constants satisfy 
\begin{align}
4 \alpha_\epsilon \le \gamma_\epsilon \le 2 \lambda_\epsilon \,,
\label{order-diff-const-reversible}
\end{align}
in the reversible case, and 
\begin{align}
2 \alpha_\epsilon \le \gamma_\epsilon \le 2 \lambda_\epsilon \,, \quad
2\alpha_\epsilon \le \lambda_\epsilon\,,
\label{order-diff-const-nonreversible}
\end{align}
in the non-reversible case.
See \cite{mlsi-bobkov, caputo2009, log-sob-diaconis, Guionnet2002}
and references therein for more details.
Let $\bar{\mathcal{E}}$ denote the Dirichlet form of the reduced Markov chain $\bar{\mathcal{C}}$. 
Its Poincar\'e constant $\bar{\lambda}$, logarithmic Sobolev constant
$\bar{\alpha}$, as well as the modified logarithmic Sobolev constant $\bar{\gamma}$
can be defined similarly as in (\ref{poincare-const}), (\ref{log-sobolev-const}), (\ref{mlsi-def}), by replacing $\mathcal{E}_\epsilon$,
$\pi^\epsilon$, $Q$ with $\bar{\mathcal{E}}$, $w$, $\bar{Q}$ respectively.
Correspondingly, they satisfy the
inequality 
\begin{align}
4 \bar{\alpha} \le \bar{\gamma} \le 2 \bar{\lambda} \,,
\label{order-diff-const-bar-reversible}
\end{align}
in the reversible case, and 
\begin{align}
  2 \bar{\alpha} \le \bar{\gamma} \le 2 \bar{\lambda} \,, \quad 2\bar{\alpha}
  \le \bar{\lambda}\,,
\label{order-diff-const-bar-nonreversible}
\end{align}
in the non-reversible case.

Briefly speaking, in this paper we will establish the
convergence of $\rho_t$ to $\bar{\rho}_t$ in (\ref{fp-ms-intro}), and the
convergence of constants $\lambda_\epsilon$, $\alpha_\epsilon$,
$\gamma_\epsilon$ in (\ref{poincare-const}), (\ref{log-sobolev-const}), (\ref{mlsi-def}) to their
counterpart $\bar{\lambda}$, $\bar{\alpha}$ and $\bar{\gamma}$ respectively.
\subsection{Notations}
\label{subsec-notation}
In this subsection we collect some notations and definitions used in this paper. 
Let $\Omega$ be a finite set. For function $f
: \Omega \rightarrow \mathbb{R}$, 
\begin{align}
  |f|_{\infty}:=\max\limits_{x \in \Omega} |f(x)|\,, \quad |f|_2 :=
  \Big(\sum_{x \in \Omega} f^2(x)\Big)^{\frac{1}{2}}\,
\end{align}
are the $L^\infty$ norm and $L^2$ norm of $f$.  
Given a matrix $A$
of order $k \times l$, denote its 
infinity norm as $\|A\|_\infty$, i.e. $\|A\|_\infty = \sup\limits_{1\le i\le k}
\sum\limits_{j=1}^{l}|a_{ij}|$. 
For matrix $Q_1$ in (\ref{q-form}), we define 
\begin{align}
  Q_\infty := \|Q_1\|_{\infty} = \max_{x \in X} \sum_{x' \in X} |Q_1(x,x')| = 
2 \max_{x \in X}  \sum_{x' \neq x} Q_1(x,x')\,,
\label{Q-inf}
\end{align}
where we have used the fact that the off-diagonal entries of $Q_1$ are
non-negative, and $Q_1(x,x) = -\sum\limits_{x'\neq x} Q_1(x,x') < 0$, $\forall x \in X$.
From definition (\ref{q-bar}) of matrix $\bar{Q}$, it is direct to check $\|\bar{Q}\|_\infty \le Q_\infty$. 

Let $\mu$ be a probability measure over set $\Omega$, 
$L^2(\mu)$ is the Hilbert space consisting of all real functions on $\Omega$ with inner product 
\begin{align*}
  \langle f, g\rangle_{\mu} = \sum_{x \in \Omega} f(x)g(x)\mu(x)\,, \quad \forall f,g
  : \Omega \rightarrow \mathbb{R}\,,
\end{align*}
and its norm is denoted as $|\cdot|_{2, \mu}$.
We write 
\begin{align}
\mathbf{E}_\mu f = \sum_{x \in \Omega} f(x)\mu(x) \,, \quad 
  \quad \mbox{Var}_\mu f = \sum_{x \in \Omega} (f(x) - \mathbf{E}_\pi f)^2 \mu(x)\,,
\end{align}
as the expectation and the variance of function $f$ with respect to $\mu$. 

For Markov chain $\mathcal{C}_i$ whose infinitesimal generator is $Q_{0,i}$, we denote 
its Dirichlet form, Poincar\'e constant, logarithmic Sobolev constant,
modified logarithmic Sobolev constant as $\mathcal{E}_i$, $\lambda_i$, $\alpha_i$ and $\gamma_i$,
respectively. Also set 
$$\lambda_{\min} = \min\limits_i \lambda_i\,, \quad \alpha_{\min} =
\min\limits_i \alpha_i\,, \quad \gamma_{\min} = \min\limits_i \gamma_i\,.
$$ 

Given function $f: X \rightarrow \mathbb{R}$ and $1 \le i \le m$, $f(i,
\cdot)$ denotes the vector of length $n_i$ consisting of components $f(x)$ for $x \in
X_i$, while $\widetilde{f}$ denotes a function on $\bar{X}$, defined by $\widetilde{f}(i)
= \sum\limits_{x\in X_i} f(x) \pi_i(x)$, $1\le i \le m$.

We also need some notations when studying the general non-reversible case.
Define 
\begin{align}
  \begin{split}
  \Gamma := &\, \mbox{tr} (Q_1Q_1^T) = \sum_{x \in X}\sum_{y \in X}
  Q_1(y,x)^2\,, \\
  d := & \max_{x \in X} \Big|\Big\{y \in X ~|~ Q_1(x,y) \neq
0\Big\}\Big|\,,
\end{split}
\end{align}
where $|\cdot|$ denotes the cardinality of a given set.
$\sigma_i$ and $\bar{\sigma}$ denote the smallest nonzero singular value of
matrix $Q_{0,i}$ and $\bar{Q}$, respectively. Also set $\sigma_{\min} = \min\limits_i \sigma_i$.

The paper is organized as follows. Section~\ref{sec-reversible} is devoted to
obtain several asymptotic results when Markov chain $\mathcal{C}$ is reversible.
The general Markov chain without reversibility assumption is studied in Section~\ref{sec-general}. 
In Section~\ref{sec-conclusion}, we discuss our results and make conclusions. 
Appendix~\ref{app-sec-1} collects some useful facts related to 
continuous-time Markov chain. Appendix~\ref{app-sec-2} contains formal
arguments which motivates our asymptotic results.
\section{Asymptotic analysis : reversible case}
\label{sec-reversible}
In this section, we establish several asymptotic convergence results under the assumption
that Markov chain $\mathcal{C}$ is reversible. 
\subsection{Invariant measure}
We start with the invariant measure $\pi^\epsilon$. 
Taking the structure of matrix $Q$ in (\ref{q-form}), (\ref{q0-block})
into consideration, the detailed balance condition reads
\begin{align}
  \begin{split}
  & \pi^\epsilon(x) Q_{0,i}(x,x') = \pi^\epsilon(x') Q_{0,i}(x',x)\,, \quad
  \mbox{if}\quad x, x' \in X_i\,, \\
  & \pi^\epsilon(x) Q_1(x,y) = \pi^\epsilon(y) Q_1(y,x)\,, \quad
  \mbox{if}\quad x \in X_i\,,\, y \in
  X_j\,, i \neq j\,. 
\end{split}
\label{detail-balance-c}
\end{align}
Since we assume Markov chain
$\mathcal{C}_i$ has a unique invariant measure, the first equation above implies
that $\mathcal{C}_i$ is also reversible, for $1 \le i \le m$,
and $\exists w^\epsilon(i) > 0$ s.t. 
\begin{align}
  \pi^\epsilon(x) = w^\epsilon(i) \pi_i(x)\,, \quad \mbox{for}~ x \in X_i\,.
  \label{pi-w}
\end{align}
 We have
\begin{align}
  \sum_{i=1}^m w^\epsilon(i) = \sum_{x \in X} \pi^\epsilon(x) = 1\,.
  \label{w-sum-1}
\end{align}
 Substituting
relation (\ref{pi-w}) 
into the second equation of (\ref{detail-balance-c}) and summing up all states
$x \in X_i, y \in X_j$, we obtain
\begin{align}
  w^\epsilon(i) \bar{Q}(i,j) = w^\epsilon(j) \bar{Q}(j,i)\,, \quad 1 \le i \neq j \le m\,,
  \label{detail-balance-c-bar}
\end{align}
where matrix $\bar{Q}$ is defined in (\ref{q-bar}). Equation
(\ref{w-sum-1}) and (\ref{detail-balance-c-bar}) imply that $w^\epsilon$ 
coincides with the invariant measure $w$ of Markov chain $\bar{\mathcal{C}}$
and furthermore, $\bar{\mathcal{C}}$ is reversible with respect to $w$.
From (\ref{pi-w}) we also know that $\pi^\epsilon$ is independent of parameter $\epsilon$.
In the following of this section we will denote it as $\pi$ for simplicity.
\subsection{Kolmogorov backward equation}
We consider the Kolmogorov backward equation 
\begin{align}
  \frac{d}{dt} \rho_t = & Q\rho_t = \big(\frac{1}{\epsilon} Q_0 + Q_1\big)
  \rho_t \label{fp-ms} 
\end{align}
with initial condition $\rho_0$ ($\rho_0$ can be negative), or more explicitly, 
\begin{align}
  \frac{d}{dt} \rho_t(x) = & \frac{1}{\epsilon} \sum_{x'\neq x,\, x' \in X_i }
  \big(\rho_t(x') - \rho_t(x)\big) Q_{0,i}(x, x') + \sum_{y \not\in X_i} \big(\rho_t(y) -
  \rho_t(x)\big) Q_1(x, y) \,,
\label{fp-ms-x} 
\end{align}
for $x \in X_i$, $1\le i \le m$. 
Multiplying both sides of (\ref{fp-ms-x}) by $\pi_i(x)$, summing up states $x \in X_i$,
and noticing that $Q_{0,i}^T \pi_i = 0$, we obtain the equation of
$\widetilde{\rho}_t(i) = \sum\limits_{x \in X_i} \rho_t(x) \pi_i(x)$ as 
\begin{align}
  \frac{d}{dt} \widetilde{\rho}_t(i) = \sum_{x \in X_i}\sum_{y \not\in X_i}
  (\rho_t(y) - \rho_t(x)) Q_1(x, y) \pi_i(x)\,, \quad 1 \le i \le m\,.
  \label{marginal-fp}
\end{align}

We also introduce the Kolmogorov backward equation of the reduced
Markov chain $\bar{\mathcal{C}}$
\begin{align}
  \frac{d}{dt} \bar{\rho}_t = & \bar{Q} \bar{\rho}_t 
  = \sum_{j \neq i} (\bar{\rho}_t(j) - \bar{\rho}_t(i)) \bar{Q}(i,
  j)\,
\label{averaged-rho-eqn}
\end{align}
%  = & \sum_{j\neq i} (\bar{\rho}_t(j) - \bar{\rho}_t(i)) \Big(\sum_{x \in
%X_i}\sum_{y\in X_j} Q_1(x, y) \pi_i(x)\Big)\,,\quad 1 \le i \le m\,,
%\end{align}
with initial condition $\bar{\rho}_0 = \widetilde{\rho}_0$, where matrix $\bar{Q}$
is defined in (\ref{q-bar}). We have 
\begin{theorem}
  Assume Markov chain $\mathcal{C}$ is reversible. 
  Consider functions $\rho_t$, $\widetilde{\rho}_t$ and $\bar{\rho}_t$, which
  are solutions of equation (\ref{fp-ms}), (\ref{marginal-fp}) and
  (\ref{averaged-rho-eqn}), respectively. For $t \ge 0$, we have
\begin{align}
 |\rho_t(\cdot) -
\widetilde{\rho}_t(i)\mathbf{1}\big|_{2,\pi_i} 
\le \Big(e^{-\frac{\lambda_i t}{\epsilon}} + \frac{2\epsilon}{\lambda_i}
 Q_\infty\Big) |\rho_0|_{\infty}\,,
 \label{thm-1-eqn-1}
\end{align}
\begin{align}
  |\widetilde{\rho}_t - \bar{\rho}_t|_{2,w} 
  \le & \frac{Q_\infty|\rho_0|_{\infty}}{\lambda_{\min}}
 \Big(\min\Big\{\frac{1}{\min\limits_{i,x \in X_i} \pi_i(x)},
\frac{m}{2}\Big\}\Big)^{\frac{1}{2}}
\Big(\frac{2Q_\infty}{\bar{\lambda}} + 1\Big) \epsilon\,,
 \label{thm-1-eqn-2}
\end{align}
 where constants involved are defined in Section~\ref{sec-intro}.
\label{thm-0}
\end{theorem}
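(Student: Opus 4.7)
Both estimates follow from energy inequalities: differentiate a squared $L^2$-norm, bound the resulting bilinear form below by a Poincar\'e constant ($\lambda_i$ for the intra-cluster chain $\mathcal{C}_i$ or $\bar{\lambda}$ for $\bar{\mathcal{C}}$), control the slow inter-cluster remainder via Cauchy--Schwarz together with the $L^\infty$ contractivity of the backward semigroup $|\rho_t|_\infty \le |\rho_0|_\infty$, and close by Gronwall or Duhamel. I would prove (\ref{thm-1-eqn-1}) first and feed it as input into the proof of (\ref{thm-1-eqn-2}).

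For (\ref{thm-1-eqn-1}), fix $1 \le i \le m$ and work with the mean-zero residual $r_t(x) := \rho_t(x) - \widetilde{\rho}_t(i)$ on $X_i$. Subtracting (\ref{marginal-fp}) from (\ref{fp-ms-x}) and using $Q_{0,i}\mathbf{1} = 0$, the fast part of $\tfrac{1}{2}\tfrac{d}{dt}|r_t|_{2,\pi_i}^2$ equals $-\tfrac{1}{\epsilon}\mathcal{E}_i(r_t, r_t) \le -\tfrac{\lambda_i}{\epsilon}|r_t|_{2,\pi_i}^2$, while the inter-cluster contribution $\sum_{x\in X_i,\,y\notin X_i} r_t(x)(\rho_t(y)-\rho_t(x))Q_1(x,y)\pi_i(x)$ is bounded in absolute value by $Q_\infty|\rho_0|_\infty |r_t|_{2,\pi_i}$ using $|\rho_t(y)-\rho_t(x)| \le 2|\rho_0|_\infty$, $\sum_{y\notin X_i} Q_1(x,y) \le Q_\infty/2$, and Jensen $|r_t|_{1,\pi_i} \le |r_t|_{2,\pi_i}$. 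The resulting scalar ODI $u_t' \le -\tfrac{\lambda_i}{\epsilon}u_t + Q_\infty|\rho_0|_\infty$ with $u_t = |r_t|_{2,\pi_i}$ integrates directly to (\ref{thm-1-eqn-1}); the initial bound $u_0 \le |\rho_0|_\infty$ comes from $\mbox{Var}_{\pi_i}(\rho_0) \le \mathbf{E}_{\pi_i}[\rho_0^2]$.

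For (\ref{thm-1-eqn-2}) set $\phi_t := \widetilde{\rho}_t - \bar{\rho}_t$ and subtract (\ref{averaged-rho-eqn}) from (\ref{marginal-fp}); a short computation using (\ref{q-bar}) recasts the difference as $\tfrac{d}{dt}\phi_t = \bar{Q}\phi_t + R_t$, where the remainder $R_t(i)$ is an integrated difference of the residuals $g_t(x) := \rho_t(x) - \widetilde{\rho}_t(i(x))$ already controlled by (\ref{thm-1-eqn-1}). Before invoking $\bar{\lambda}$ I verify (i) $\phi_0 = 0$ (since $\bar{\rho}_0 = \widetilde{\rho}_0$) and (ii) $\langle \phi_t, \mathbf{1}\rangle_w = 0$ for all $t$; the latter reduces to $\sum_i R_t(i) w(i) = 0$, which follows from a single swap of summation indices combined with the detailed balance relations (\ref{detail-balance-c}). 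The Poincar\'e inequality for $\bar{\mathcal{C}}$ then yields $\tfrac{d}{dt}|\phi_t|_{2,w} \le -\bar{\lambda}|\phi_t|_{2,w} + |R_t|_{2,w}$, and Duhamel combined with the elementary estimates $\int_0^t e^{-\bar{\lambda}(t-s)}\,ds \le 1/\bar{\lambda}$ and $\int_0^t e^{-\bar{\lambda}(t-s)}e^{-\lambda_{\min} s/\epsilon}\,ds \le \epsilon/\lambda_{\min}$ (valid for $\epsilon$ small) converts any pointwise bound $|R_s|_{2,w} \le C_\star Q_\infty(e^{-\lambda_{\min} s/\epsilon} + 2\epsilon Q_\infty/\lambda_{\min})|\rho_0|_\infty$ into the prefactor structure $\frac{\epsilon Q_\infty |\rho_0|_\infty}{\lambda_{\min}}\bigl(1+2Q_\infty/\bar{\lambda}\bigr)$ displayed in (\ref{thm-1-eqn-2}).

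The technical heart of the argument is therefore the estimate on $|R_t|_{2,w}$ with the sharp prefactor $C_\star = (\min\{1/\min_{i,x}\pi_i(x),\, m/2\})^{1/2}$. I would derive two independent Cauchy--Schwarz estimates on $R_t(i)$ and take the smaller. The first passes to the $L^\infty$ norm via $|g_t|_\infty \le \max_i|g_t(\cdot)|_{2,\pi_i}/\sqrt{\min_{i,x}\pi_i(x)}$ and uses only the row-sum constraint $\sum_{y\neq x}Q_1(x,y)\le Q_\infty/2$, which produces the first term in $C_\star$. The second keeps $g_t$ in $L^2(\pi_j)$ but splits $R_t(i)$ over the $m-1$ clusters $j\neq i$ and uses detailed balance to rewrite $\sum_{x\in X_i} Q_1(x,y)\pi_i(x)$ as $\tfrac{w(j)}{w(i)}\pi_j(y)\sum_{x\in X_i}Q_1(y,x)$, which after a Cauchy--Schwarz summation over $j$ produces the factor $\sqrt{m/2}$. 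Keeping the numerical constants consistent across the two routes, and making sure that reversibility enters only through this symmetrization and not through some hidden hypothesis, is the fussiest part of the argument; everything else is a quite standard energy estimate.
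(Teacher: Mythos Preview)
Your proposal is correct and follows essentially the same route as the paper: a Gronwall argument for the intra-cluster residual, then Duhamel on $\bar{Q}$ combined with two separate Cauchy--Schwarz bounds on the remainder (one via the pointwise estimate $|g_t|_\infty \le |g_t|_{2,\pi_i}/\sqrt{\min\pi_i}$, one via the detailed-balance symmetrization you describe) to obtain the $\min$-prefactor. The only cosmetic difference is that for (\ref{thm-1-eqn-1}) you differentiate $|r_t|_{2,\pi_i}^2$ directly, whereas the paper reaches the same scalar ODI via the variation-of-constants formula $\rho_t = e^{(t-s)Q_0/\epsilon}\rho_s + \int_s^t e^{(t-r)Q_0/\epsilon}Q_1\rho_r\,dr$ together with a separate Lipschitz bound on $\widetilde{\rho}_t$; your route is slightly cleaner and in fact yields $\epsilon Q_\infty/\lambda_i$ rather than $2\epsilon Q_\infty/\lambda_i$.
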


Before entering the proof, we would like to reinterpret the results of Theorem~\ref{thm-0} by
considering the corresponding Markov chain processes.
Let $x_t \in X$ and $\bar{x}_t \in \bar{X}$ be the Markov chain $\mathcal{C}$ and
$\bar{\mathcal{C}}$, respectively.
Given function $f : X \rightarrow \mathbb{R}$ and defining $\widetilde{f}(i) =
\sum\limits_{x \in X_i} f(x)\pi_i(x)$ as before, we consider quantities 
\begin{align}
   f_t(x) = \mathbf{E}(f(x_t)~|~x_0 = x)\,,\quad 
  \widetilde{f}_t(i) = \mathbf{E}(f(x_t)~|~x_0 \sim \pi_i)\,, \quad x \in X\,,
  \label{exp-1}
\end{align}
then we know $f_t$
satisfies equation (\ref{fp-ms}) with initial condition $f_0 = f$, while
$\widetilde{f}_t(i)$ satisfies (\ref{marginal-fp}) with $\rho_t$ replaced by
$f_t$.
  Similarly define 
\begin{align}
  & \bar{f}_t(i) = \mathbf{E}(\widetilde{f}(\bar{x}_t)~|~\bar{x}_0 = i)\,,
  \quad 1 \le i \le m\,,
  \label{exp-2}
\end{align}
then  
$\bar{f}$ satisfies (\ref{averaged-rho-eqn}) with initial condition
$\bar{f}_0 = \widetilde{f}_0 = \widetilde{f}$. Theorem~\ref{thm-0} 
implies
\begin{corollary}
  Consider reversible Markov chains $x_t \in X$ and $\bar{x}_t \in \bar{X}$ defined by infinitesimal generator $Q$ and $\bar{Q}$, respectively.
  Given $f : X \rightarrow \mathbb{R}$, define the quantities $f_t, \widetilde{f}_t, \bar{f}_t$ by (\ref{exp-1}) and
  (\ref{exp-2}). We have $\forall\, t \ge 0$, 
  \begin{align}
    \begin{split}
    & |f_t(i, \cdot) -
\widetilde{f}_t(i)\mathbf{1}\big|_{2,\pi_i} 
\le \Big(e^{-\frac{\lambda_i t}{\epsilon}} + \frac{2\epsilon}{\lambda_i}
 Q_\infty\Big) |f|_{\infty}\,, \quad 1 \le i \le m\,,\\
 & |\widetilde{f}_t - \bar{f}_t|_{2,w} 
  \le
   \frac{Q_\infty|f|_{\infty}}{\lambda_{\min}}
 \Big(\min\Big\{\frac{1}{\min\limits_{i,x \in X_i} \pi_i(x)},
\frac{m}{2}\Big\}\Big)^{\frac{1}{2}}
\Big(\frac{2Q_\infty}{\bar{\lambda}} + 1\Big) \epsilon\,.
\end{split}
\end{align}
\label{corol-1}
\end{corollary}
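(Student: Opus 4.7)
The plan is to derive Corollary~\ref{corol-1} as an immediate consequence of Theorem~\ref{thm-0} by identifying the three conditional expectations $f_t$, $\widetilde{f}_t$, $\bar{f}_t$ with the functions $\rho_t$, $\widetilde{\rho}_t$, $\bar{\rho}_t$ solving equations (\ref{fp-ms}), (\ref{marginal-fp}), (\ref{averaged-rho-eqn}), and verifying that the initial conditions match. The engine is the classical fact that for a continuous-time Markov chain $x_t$ with infinitesimal generator $Q$, the function $x \mapsto \mathbf{E}(f(x_t)\mid x_0 = x)$ solves the Kolmogorov backward equation $\frac{d}{dt} f_t = Q f_t$ with $f_0 = f$; the analogous statement holds for $\bar{\mathcal{C}}$ with generator $\bar{Q}$. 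Both facts are of the kind collected in the paper's Appendix~\ref{app-sec-1}, so they may simply be cited.

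First I would set $\rho_t := f_t$, so that $\rho_t$ solves (\ref{fp-ms}) with initial datum $\rho_0 = f$, and in particular $|\rho_0|_\infty = |f|_\infty$. Next, by the tower property of conditional expectation together with the definition of the average in (\ref{exp-1}), one has $\widetilde{f}_t(i) = \mathbf{E}(f(x_t)\mid x_0 \sim \pi_i) = \sum_{x \in X_i} f_t(x)\pi_i(x)$. This is precisely the $\pi_i$-weighted sum that the paper used to pass from (\ref{fp-ms}) to (\ref{marginal-fp}); therefore $\widetilde{f}_t$ coincides with $\widetilde{\rho}_t$. For the reduced chain, $\bar{f}_t$ solves $\frac{d}{dt}\bar{f}_t = \bar{Q}\bar{f}_t$ with initial condition $\bar{f}_0 = \widetilde{f}$. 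Since $\widetilde{f}_0(i) = \sum_{x \in X_i} f(x)\pi_i(x) = \widetilde{f}(i)$ as well, the compatibility condition $\bar{\rho}_0 = \widetilde{\rho}_0$ required by Theorem~\ref{thm-0} is automatically satisfied.

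With these identifications in place, substituting $\rho_t \mapsto f_t$, $\widetilde{\rho}_t \mapsto \widetilde{f}_t$, $\bar{\rho}_t \mapsto \bar{f}_t$, and $|\rho_0|_\infty \mapsto |f|_\infty$ in the two inequalities (\ref{thm-1-eqn-1}) and (\ref{thm-1-eqn-2}) of Theorem~\ref{thm-0} yields the two bounds claimed in Corollary~\ref{corol-1} verbatim. Honestly, there is no real obstacle here; the only points meriting even a brief remark are that Theorem~\ref{thm-0} permits arbitrary (possibly negative) real initial data, which is what allows us to apply it to a general $f : X \to \mathbb{R}$, and that the reversibility of $\bar{\mathcal{C}}$ with respect to $w$, already established in Section~\ref{sec-reversible} from (\ref{detail-balance-c-bar}), is what justifies speaking of the spectral-gap-type constants $\bar{\lambda}$ and $\lambda_i$ on both sides.
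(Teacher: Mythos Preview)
Your proposal is correct and follows essentially the same approach as the paper: the paper's derivation of Corollary~\ref{corol-1} (given in the paragraph immediately preceding it) consists precisely of identifying $f_t,\widetilde{f}_t,\bar{f}_t$ with the solutions $\rho_t,\widetilde{\rho}_t,\bar{\rho}_t$ of (\ref{fp-ms}), (\ref{marginal-fp}), (\ref{averaged-rho-eqn}) via the Kolmogorov backward equation and then invoking Theorem~\ref{thm-0}. Your verification that $\widetilde{f}_t(i)=\sum_{x\in X_i}f_t(x)\pi_i(x)$ and that the initial conditions match is exactly what the paper sketches, with slightly more detail.
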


Now consider a probability measure $\mu$ on $X$ and define probability measure
$\widetilde{\mu}$ on $\bar{X}$ by $\widetilde{\mu}(i) = \sum\limits_{x \in X_i}
\mu(x)$, $1 \le i \le m$. Also define
\begin{align}
  \begin{split}
   \mu_t(x) &= \mathbf{P}(x_t = x\,|\,x_0 \sim \mu)\,,\quad x \in X\,,\\
    \widetilde{\mu}_t(i) &= \mathbf{P}(x_t \in X_i\,|\,x_0 \sim \mu) = \sum_{x
  \in X_i} \mu_t(x) \,,\quad 1 \le i \le m\,, \\
    \bar{\mu}_t(i) &= \mathbf{P}(\bar{x}_t = i\,|\,\bar{x}_0 \sim
  \widetilde{\mu}) \,, \quad 1 \le i \le m\,,
  \end{split}
  \label{prob-measures}
\end{align}
and the probability densities with respect to invariant measures $\pi$ and $w$ 
\begin{align}
  \rho_t = \frac{d\mu_t}{d\pi}\,, \quad \widetilde{\rho}_t =
  \frac{d\widetilde{\mu}_t}{d w}\,,\quad \bar{\rho}_t =
  \frac{d\bar{\mu}}{d w}\,.
  \label{prop-measures-density}
\end{align}
Recalling the detailed balance condition (\ref{detail-balance-c}), we can
check that functions $\rho_t$,
$\widetilde{\rho}_t$ and $\bar{\rho}_t$ satisfy equation (\ref{fp-ms}),
(\ref{marginal-fp}) and (\ref{averaged-rho-eqn}), respectively (see
Appendix~\ref{app-sec-1}).
Therefore Theorem~\ref{thm-0} implies
\begin{corollary}
  Consider reversible Markov chains $x_t \in X$ and $\bar{x}_t \in \bar{X}$ defined by
  infinitesimal generator $Q$ and $\bar{Q}$, respectively. Given probability
  measure $\mu$ on space $X$. Let $\rho_t$, $\widetilde{\rho}_t$
  and $\bar{\rho}_t$ be the density of probability measures
  defined by (\ref{prob-measures}) and  (\ref{prop-measures-density}). For $t\ge 0$, we have 
  \begin{align}
    \begin{split}
    & |\rho_t(i, \cdot) -
\widetilde{\rho}_t(i)\mathbf{1}\big|_{2,\pi_i} 
\le \Big(e^{-\frac{\lambda_i t}{\epsilon}} + \frac{2\epsilon}{\lambda_i}
 Q_\infty\Big) |\rho_0|_{\infty}\,, \quad 1 \le i \le m\,,\\
 & |\widetilde{\rho}_t - \bar{\rho}_t|_{2,w} 
 \le \frac{Q_\infty|\rho_0|_{\infty}}{\lambda_{\min}}
 \Big(\min\Big\{\frac{1}{\min\limits_{i,x \in X_i} \pi_i(x)},
\frac{m}{2}\Big\}\Big)^{\frac{1}{2}}
\Big(\frac{2Q_\infty}{\bar{\lambda}} + 1\Big) \epsilon\,.
\end{split}
\end{align}
\label{corol-2}
\end{corollary}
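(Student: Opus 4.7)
The two inequalities correspond to two separate $L^2$ energy estimates which I would prove in order, since (\ref{thm-1-eqn-2}) uses (\ref{thm-1-eqn-1}) as input. Both follow the same template: differentiate a weighted $L^2$ norm, bound the dissipation below by the appropriate Poincar\'e inequality to extract exponential decay, and absorb the remaining cross term by Cauchy--Schwarz in a Gronwall/Duhamel inequality.

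\textbf{Intra-cluster estimate.} Fix $1\le i\le m$ and let $u_t^i(x)=\rho_t(x)-\widetilde{\rho}_t(i)$ for $x\in X_i$; by construction $u_t^i$ has zero mean under $\pi_i$. Because $Q_{0,i}\mathbf{1}=0$, one has $Q_{0,i}u_t^i=Q_{0,i}\rho_t|_{X_i}$, so the $Q_0$-contribution to $\frac{d}{dt}|u_t^i|_{2,\pi_i}^2$ equals $-\frac{2}{\epsilon}\mathcal{E}_i(u_t^i,u_t^i)$, which the Poincar\'e inequality for $\mathcal{C}_i$ bounds above by $-\frac{2\lambda_i}{\epsilon}|u_t^i|_{2,\pi_i}^2$. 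The $Q_1$-contribution reduces to $2\langle u_t^i,h_t^i\rangle_{\pi_i}$ with $h_t^i(x)=\sum_{y\not\in X_i}(\rho_t(y)-\rho_t(x))Q_1(x,y)$, since the subtracted mean $\dot{\widetilde{\rho}}_t(i)\mathbf{1}$ is $\pi_i$-orthogonal to $u_t^i$. The maximum principle $|\rho_t|_\infty\le|\rho_0|_\infty$ for the backward equation gives $|h_t^i|_\infty\le Q_\infty|\rho_0|_\infty$. Cauchy--Schwarz, together with $|u_0^i|_{2,\pi_i}=\sqrt{\mbox{Var}_{\pi_i}\rho_0}\le|\rho_0|_\infty$, then reduces everything to the scalar inequality $\dot\phi_i\le -\frac{\lambda_i}{\epsilon}\phi_i+Q_\infty|\rho_0|_\infty$ for $\phi_i=|u_t^i|_{2,\pi_i}$, and Gronwall yields (\ref{thm-1-eqn-1}).

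\textbf{Averaging estimate.} Set $v_t=\widetilde{\rho}_t-\bar{\rho}_t$, so $v_0=0$. Adding and subtracting $\widetilde{\rho}_t(j)-\widetilde{\rho}_t(i)$ inside the flux sum of (\ref{marginal-fp}) recasts the dynamics as $\dot v_t=\bar Q v_t+E_t$ with
\begin{equation*}
E_t(i)=\sum_{j\neq i}\sum_{x\in X_i,\,y\in X_j}\bigl(u_t^j(y)-u_t^i(x)\bigr)Q_1(x,y)\pi_i(x).
\end{equation*}
Reversibility of $Q$ implies conservation of $\sum_x\rho_t(x)\pi(x)=\sum_i\widetilde{\rho}_t(i)w(i)$, and reversibility of $\bar Q$ conservation of $\sum_i\bar{\rho}_t(i)w(i)$; as these agree at $t=0$, $v_t$ has zero $w$-mean and Poincar\'e for $\bar{\mathcal C}$ gives $\langle v_t,\bar Q v_t\rangle_w\le -\bar\lambda|v_t|_{2,w}^2$. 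The source $E_t$ admits two complementary $L^2(w)$ bounds in terms of the intra-cluster norms $|u_t^i|_{2,\pi_i}$: applying Cauchy--Schwarz to each $E_t(i)$ and then using the detailed balance $\pi(x)Q_1(x,y)=\pi(y)Q_1(y,x)$ to symmetrize the cross term yields $|E_t|_{2,w}^2\le Q_\infty^2\sum_i w(i)|u_t^i|_{2,\pi_i}^2$, while the crude pointwise bound $|u_t|_\infty\le\max_i|u_t^i|_{2,\pi_i}/\sqrt{\min_x\pi_i(x)}$ produces an alternative carrying the factor $1/\sqrt{\min_{i,x}\pi_i(x)}$. Retaining the smaller of the two accounts for the $\min\{\cdot,\cdot\}^{1/2}$ prefactor in (\ref{thm-1-eqn-2}). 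Substituting the part~1 estimate for $|u_s^i|_{2,\pi_i}$ into the scalar inequality $\dot\psi\le -\bar\lambda\psi+|E_s|_{2,w}$ for $\psi=|v_t|_{2,w}$, applying Duhamel with $v_0=0$, and using the elementary integrals $\int_0^\infty e^{-\lambda_{\min}s/\epsilon}\,ds=\epsilon/\lambda_{\min}$ and $\int_0^\infty e^{-\bar\lambda s}\,ds=1/\bar\lambda$ together give (\ref{thm-1-eqn-2}).

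\textbf{Main obstacle.} The single delicate step is the $L^2(w)$ bound on the source $E_t$: the quadratic route requires detailed balance to transform $\sum_{x\in X_i,\,y\in X_j}u_t^j(y)^2\,Q_1(x,y)\pi(x)$ into $\sum_{y\in X_j}u_t^j(y)^2\,\pi(y)\sum_{x\not\in X_j}Q_1(y,x)$, so that after summing over $i$ the contribution reassembles into a clean sum of intra-cluster variances rather than something weighted awkwardly by $w(j)/w(i)$. All remaining ingredients --- the conservation argument for the zero-$w$-mean of $v_t$, the maximum principle, Duhamel, and Gronwall --- are routine.
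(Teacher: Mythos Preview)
Your approach is essentially the paper's own proof of Theorem~\ref{thm-0}, which is the correct content; but note that the paper's proof of Corollary~\ref{corol-2} \emph{itself} is a one-line reduction: by reversibility $Q^*=Q$, so the density $\rho_t=d\mu_t/d\pi$ solves the backward equation (\ref{fp-ms}), and likewise $\widetilde\rho_t,\bar\rho_t$ solve (\ref{marginal-fp}), (\ref{averaged-rho-eqn}); Theorem~\ref{thm-0} then applies verbatim. You implicitly use this when you invoke the maximum principle ``for the backward equation'', but it is worth stating, since it is the only step that distinguishes Corollary~\ref{corol-2} from Corollary~\ref{corol-1}.

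On the intra-cluster bound your direct energy differentiation is equivalent to (in fact marginally sharper than) the paper's variation-of-constants route, which reaches the same scalar Gronwall inequality after one extra step. On the averaging bound your $E_t$ is exactly the paper's $\phi_t$ and the Duhamel argument is identical. One constant is off, however: the Cauchy--Schwarz/detailed-balance route does \emph{not} give $|E_t|_{2,w}^2\le Q_\infty^2\sum_i w(i)|u_t^i|_{2,\pi_i}^2$ as you claim, but rather $|E_t|_{2,w}^2\le \tfrac{m}{2}Q_\infty^2\sum_i w(i)|u_t^i|_{2,\pi_i}^2$; the factor $m$ enters because after symmetrizing, one still has to sum $\sum_i|\sum_{x\in X_i}Q_1(y,x)|$ over the $m$ clusters. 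This is precisely the $\sqrt{m/2}$ appearing in the stated bound, so without it you would not recover (\ref{thm-1-eqn-2}) with the correct prefactor.
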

{\em Proof of Theorem~\ref{thm-0}} :
\begin{enumerate}
  \item
  We start with the first inequality (\ref{thm-1-eqn-1})
  concerning $\rho_t$ and $\widetilde{\rho}_t$.
For $\rho_t$ satisfying (\ref{fp-ms}), we know 
$|\rho_t|_{\infty} \le |\rho_0|_{\infty}$, $t \ge 0$ (can be easily seen
from (\ref{exp-1})).
For the right hand side of (\ref{marginal-fp}), we have 
\begin{align*}
  &  \Big|\sum_{x \in X_i} \sum_{y\not\in X_i} (\rho_t(y) - \rho_t(x)) Q_1(x, y)
\pi_i(x)\Big| \\
=&  \Big|\sum_{x \in X_i} \sum_{y\in X} \rho_t(y) Q_1(x, y) \pi_i(x)\Big|
\le  |\rho_t|_\infty \max_{x \in X} \sum_{y \in X} |Q_1(x,y)| \le
Q_\infty|\rho_0|_{\infty}\,.
\end{align*}
Therefore (\ref{marginal-fp}) implies 
\begin{align}
|\widetilde{\rho}_t(i) - \widetilde{\rho}_s(i)| \le
|t-s|Q_{\infty} |\rho_0|_{\infty}\,, \quad 1 \le i \le m\,.
\label{bound-1}
\end{align}

For equation (\ref{fp-ms}) which is written in matrix form, using variation of constants
formula, we can obtain
\begin{align}
  \rho_t = e^{(t - s)Q_0 / \epsilon} \rho_s + \int_s^t e^{(t - r)Q_0 /
  \epsilon} Q_1 \rho_r \,dr\,, \quad 0 \le s \le t.
  \label{rho-variaion-constant}
\end{align}
Since $e^{(t - r)Q_0 / \epsilon}$ is a stochastic matrix, we have 
\begin{align}
  \begin{split}
  & \Big|\int_s^t e^{(t - r)Q_0 / \epsilon} Q_1 \rho_r\, dr\Big|_{\infty} \\
  \le &
  \int_s^t \big|Q_1 \rho_r\big|_{\infty} dr\le 
  \int_s^t \big\|Q_1\big\|_{\infty} \big|\rho_r|_{\infty} dr
  \le (t - s) Q_\infty |\rho_0|_{\infty}\,.
\end{split}
  \label{bound-2}
\end{align}
For the first term on the right hand side of (\ref{rho-variaion-constant}), noticing
$Q_0$ (therefore also $e^{(t - s)Q_0 / \epsilon}$) is a block diagonal matrix
and applying Poincar\'e inequality \cite{mlsi-bobkov, caputo2009, log-sob-diaconis}, we deduce
\begin{align}
  \begin{split}
  & \big|e^{(t - s)Q_{0,i} / \epsilon} \rho_s(i, \cdot) -
 \widetilde{\rho}_s(i)\mathbf{1}\big|_{2,\pi_i}  \\
 = & \big|e^{(t - s)Q_{0,i} / \epsilon} (\rho_s(i,
  \cdot) - \widetilde{\rho}_s(i)\mathbf{1})\big|_{2,\pi_i} 
  \le 
  e^{-\frac{(t-s)\lambda_i}{\epsilon}} 
\big|\rho_s(i, \cdot) - \widetilde{\rho}_s(i)\mathbf{1}\big|_{2,\pi_i}\,,
\end{split}
\label{bound-3}
\end{align}
where $1 \le i\le m$,
$\mathbf{1}$ denotes the constant vector over subset $X_i$ and
$\rho_s(i, \cdot)$ denotes the vector consisting of $\rho_s(x)$ for $x \in
X_i$. Combining estimates (\ref{bound-1})-(\ref{bound-3}) together, we have 
\begin{align*}
  & |\rho_t(i, \cdot) - \widetilde{\rho}_t(i)\mathbf{1}\big|_{2,\pi_i} \\
 \le&  |\widetilde{\rho}_t(i) - \widetilde{\rho}_s(i)| + 
  e^{-\frac{(t-s)\lambda_i}{\epsilon}} 
\big|\rho_s(i, \cdot) - \widetilde{\rho}_s(i)\mathbf{1}\big|_{2,\pi_i} 
 + (t - s) Q_\infty |\rho_0|_{\infty} \\
\le & e^{-\frac{(t-s)\lambda_i}{\epsilon}} 
\big|\rho_s(i, \cdot) - \widetilde{\rho}_s(i)\mathbf{1}\big|_{2,\pi_i} 
+ 2(t - s) Q_\infty |\rho_0|_{\infty} \,.
\end{align*}
Fix index $i$ and define $G(t) = |\rho_t(i, \cdot) -
\widetilde{\rho}_t(i)\mathbf{1}\big|_{2,\pi_i}$. We subtract $G(s)$ 
and then divide $(t-s)$ on both sides of the inequality above. Let $t \rightarrow s+$, we obtain
\begin{align}
  \frac{d^+ G(t)}{dt} + \frac{\lambda_i}{\epsilon} G(t) \le 
  2 Q_\infty |\rho_0|_{\infty} \,, \quad t \ge 0\,.
\end{align}
Gronwall's inequality then implies 
\begin{align}
 |\rho_t(i, \cdot) -
\widetilde{\rho}_t(i)\mathbf{1}\big|_{2,\pi_i} 
\le & e^{-\frac{\lambda_i t}{\epsilon}} |\rho_0(i, \cdot) - \widetilde{\rho}_0(i)\mathbf{1}\big|_{2,\pi_i}
 + \frac{2\epsilon}{\lambda_i} Q_\infty |\rho_0|_{\infty} \notag \\
 \le &\big(e^{-\frac{\lambda_i t}{\epsilon}} + \frac{2\epsilon}{\lambda_i}
 Q_\infty\big) |\rho_0|_{\infty}\,, \label{1st-part-thm-0}
\end{align}
where we have used 
\begin{align*}
  &  |\rho_0(i, \cdot) -
\widetilde{\rho}_0(i)\mathbf{1}\big|_{2,\pi_i}  \\
=& \Big[\sum_{x \in X_i} \Big(\rho_0(x) - \sum_{x'\in X_i}
  \rho_0(x')\pi_i(x')\Big)^2 \pi_i(x) \Big]^{\frac{1}{2}}\\
  =& \Big[\sum_{x \in X_i} \rho_0(x)^2\pi_i(x) - \Big(\sum_{x\in X_i}
  \rho_0(x)\pi_i(x)\Big)^2 \Big]^{\frac{1}{2}}
\le |\rho_0|_{\infty}\,.
\end{align*}
\item
Now we turn to the second inequality (\ref{thm-1-eqn-2}). 
First notice that the equation of $\widetilde{\rho}_t$ in
(\ref{marginal-fp}) can be rewritten as 
\begin{align}
  \frac{d}{dt} \widetilde{\rho}_t(i) 
  = & \sum_{j\neq i} \sum_{x \in X_i,\,y\in X_j} (\rho_t(y) - \rho_t(x))
  Q_1(x,y) \pi_i(x) \notag \\
   = & \sum_{j \neq i} (\widetilde{\rho}_t(j) - \widetilde{\rho}_t(i))
  \bar{Q}(i, j)\notag \\
 & +   
  \sum_{j \neq i} \sum_{x\in X_i,\,y \in X_j} \Big[\big(\rho_t(y) - \widetilde{\rho}_t(j)\big) -
    \big(\rho_t(x)
  - \widetilde{\rho}_t(i)\big)\Big] Q_1(x, y) \pi_i(x)\notag \\
  = & \sum_{j \neq i}  (\widetilde{\rho}_t(j) -
  \widetilde{\rho}_t(i)) \bar{Q}(i, j) + \phi_t(i) = \bar{Q}
  \widetilde{\rho}_t + \phi_t\,,
  \label{marginal-fp-1}
\end{align}
where 
\begin{align}
  \phi_t(i) = &
  \sum_{j \neq i} \sum_{x\in X_i,\,y \in X_j} \Big[\big(\rho_t(y) - \widetilde{\rho}_t(j)\big) -
    \big(\rho_t(x)
  - \widetilde{\rho}_t(i)\big)\Big] Q_1(x, y) \pi_i(x)\notag \\
  =& 
  \sum_{j=1}^m \sum_{x\in X_i,\,y \in X_j} \big(\rho_t(y) -
  \widetilde{\rho}_t(j)\big) Q_1(x, y) \pi_i(x)\notag\,,
  \end{align}
since the row sums of $Q_1$ are zero. Using detailed
balance condition (\ref{detail-balance-c}), we can obtain
\begin{align*}
  \mathbf{E}_w\phi_t =&
\sum\limits_{i=1}^m \phi_t(i) w(i)  \\
=&  \sum_{i=1}^m \Big[\sum_{j=1}^m\sum_{x \in X_i, y \in X_j}
    \Big(\rho_t(y) - \widetilde{\rho}_t(j)\Big)Q_1(x,y)\pi_i(x)\Big] w(i) \\ 
    =& \sum_{x\in X} \sum_{y \in X} \Big(\rho_t(y) -
  \widetilde{\rho}_t(j)\Big)\pi(y) Q_1(y,x) = 0\,. 
\end{align*}

We also need to estimate $|\phi_t|_{2,w}$. On one hand, applying inequality
(\ref{1st-part-thm-0}) , we can deduce a pointwise estimate
\begin{align}
  \big|\rho_t(i, x) - \widetilde{\rho}_t(i)\big| \le \sqrt{\frac{1}{\min\limits_{x' \in X_i}
  \pi_i(x')}}
\Big(e^{-\frac{\lambda_i t}{\epsilon}} + \frac{2\epsilon}{\lambda_i}
 Q_\infty\Big) |\rho_0|_{\infty}\,, \quad \forall x \in X_i\,.
 \label{rho-point-error}
\end{align} 
Therefore 
\begin{align}
  |\phi_t|_{2,w} \le & \max_i |\phi_t(i)| \le Q_\infty \max_{i} |\rho_t(i,
  \cdot) - \widetilde{\rho}_t(i)|_{\infty} \notag \\
  \le & Q_\infty \sqrt{\frac{1}{\min\limits_{i, x \in X_i} \pi_i(x)}}
\Big( e^{-\frac{\lambda_{\min} t}{\epsilon}} +
\frac{2\epsilon}{\lambda_{\min}} Q_\infty\Big) |\rho_0|_{\infty}\,.
 \label{phi-upper-bound-1}
\end{align}
On the other hand, we can avoid using pointwise estimate (\ref{rho-point-error}) and
compute
\begin{align*}
|\phi_t|_{2,w}^2 
= & \sum_{i=1}^m \Big[\sum_{j=1}^m\sum_{x \in X_i, y \in X_j}
    \Big(\rho_t(y) - \widetilde{\rho}_t(j)\Big)Q_1(x,y)\pi_i(x)\Big]^2 w(i) \\ 
= & \sum_{i=1}^m \Big[\sum_{j=1}^m\sum_{x \in X_i, y \in X_j}
  \Big(\rho_t(y) - \widetilde{\rho}_t(j)\Big)Q_1(y,x)\pi_j(y)w(j)\Big]^2
  \frac{1}{w(i)} \\ 
\le&  \Big[\sum_{j=1}^m\sum_{y \in X_j}
  \Big(\rho_t(y) - \widetilde{\rho}_t(j)\Big)^2
  \pi_j(y)w(j)\Big] \sum_{i=1}^m\Big[\sum_{j=1}^m\sum_{y \in X_j}
  \Big(\sum_{x \in X_i} Q_1(y,x)\Big)^2 \pi_j(y)w(j)\Big] \frac{1}{w(i)} \\ 
\le&  \Big[\sum_{j=1}^m
   |\rho_t(j, \cdot) -
\widetilde{\rho}_t(j)\mathbf{1}\big|_{2,\pi_j}^2 w(j)\Big]
  \sum_{i=1}^m\sum_{j=1}^m\sum_{y \in X_j}
  \Big|\sum_{x \in X_i} Q_1(y,x)\Big|\cdot \Big|\sum_{x \in X_i} Q_1(x,y) \pi_i(x)\Big| \\ 
  \le& \frac{Q_{\infty}}{2} \Big[\sum_{j=1}^m
   |\rho_t(j, \cdot) -
\widetilde{\rho}_t(j)\mathbf{1}\big|_{2,\pi_j}^2 w(j)\Big]
  \sum_{i=1}^m \sum_{x \in X_i} \sum_{y \in X} |Q_1(x,y)| \pi_i(x) \\ 
  \le & \frac{m Q_{\infty}^2}{2} \Big[\sum_{j=1}^m
   |\rho_t(j, \cdot) -
\widetilde{\rho}_t(j)\mathbf{1}\big|_{2,\pi_j}^2 w(j)\Big]\,, 
\end{align*}
where we have used detailed balance condition (\ref{detail-balance-c}) and 
relation (\ref{Q-inf}). Together with (\ref{phi-upper-bound-1}), we could
deduce
\begin{align}
|\phi_t|_{2,w}
\le &
 \Big(\min\Big\{\frac{1}{\min\limits_{i,x \in X_i} \pi_i(x)},
\frac{m}{2}\Big\}\Big)^{\frac{1}{2}}
Q_\infty \Big(e^{-\frac{\lambda_{\min} t}{\epsilon}}
+ \frac{2\epsilon}{\lambda_{\min}}
 Q_\infty\Big) |\rho_0|_{\infty}\,.
 \label{phi-upper-bound-2}
\end{align}

Now subtract (\ref{marginal-fp-1}) by equation (\ref{averaged-rho-eqn}), we
obtain
\begin{align}
  \frac{d}{dt} (\widetilde{\rho}_t - \bar{\rho}_t) = \bar{Q}
  (\widetilde{\rho}_t - \bar{\rho}_t)+ \phi_t\,,
\end{align}
together with initial condition $\bar{\rho}_0 = \widetilde{\rho}_0$.
Therefore we have 
\begin{align}
  \widetilde{\rho}_t - \bar{\rho}_t = \int_0^t e^{(t-s)\bar{Q}} \phi_s ds\,.
\end{align}
Since $\mathbf{E}_w\phi_s = 0$, Poincar\'e inequality implies
\begin{align}
|e^{(t-s)\bar{Q}} \phi_s|_{2,w} \le e^{-\bar{\lambda}(t-s)}
|\phi_s|_{2,w}\,.
\end{align}
Therefore, using (\ref{phi-upper-bound-2}), we
have 
\begin{align*}
  |\widetilde{\rho}_t - \bar{\rho}_t|_{2,w} 
  \le & \int_0^t |e^{(t-s)\bar{Q}} \phi_s|_{2,w} ds
  \le \int_0^t e^{-\bar{\lambda}(t-s)}
|\phi_s|_{2,w} ds \\
\le & \Big(\min\Big\{\frac{1}{\min\limits_{i,x \in X_i} \pi_i(x)},
\frac{m}{2}\Big\}\Big)^{\frac{1}{2}}
Q_\infty
\int_0^t e^{-\bar{\lambda}(t-s)}
|\rho_0|_{\infty} \Big(e^{-\frac{\lambda_{\min} s}{\epsilon}} +
\frac{2\epsilon}{\lambda_{\min}} Q_\infty\Big) ds \\
\le & \frac{Q_\infty|\rho_0|_{\infty}}{\lambda_{\min}}
 \Big(\min\Big\{\frac{1}{\min\limits_{i,x \in X_i} \pi_i(x)},
\frac{m}{2}\Big\}\Big)^{\frac{1}{2}}
\Big(\frac{2Q_\infty}{\bar{\lambda}} + 1\Big) \epsilon\,.  \qquad \endproof
\end{align*}
\end{enumerate}

\subsection{Poincar\'e constant, (modified) logarithmic Sobolev constants}
In this subsection we consider the asymptotic behavior of the Poincar\'e constant
$\lambda_\epsilon$, logarithmic Sobolev
constant $\alpha_\epsilon$, and modified logarithmic Sobolev constant $\gamma_\epsilon$
defined in (\ref{poincare-const}), (\ref{log-sobolev-const})
and (\ref{mlsi-def}), respectively. We will use the fact that the
infima in the definitions can be achieved by some extreme functions. 
Also notice that in the reversible case, as a generalization of (\ref{dirichlet-form-f-f-multiscale}), we have 
\begin{align}
  \mathcal{E}_\epsilon(f,g)
  =& \frac{1}{2\epsilon} \sum_{i=1}^m \sum_{x,x' \in X_i}
(f(x') - f(x)) (g(x') - g(x)) Q_{0,i}(x,x') \pi(x) \notag \\
& + \frac{1}{2} \sum_{i\neq j} \sum_{x\in X_i,\, y \in X_j} (f(y) -
f(x))(g(y) - g(x)) Q_1(x,y) \pi(x)\,,
\label{dirichlet-form-f-g-multiscale} 
\end{align}
for all $f, g : X \rightarrow \mathbb{R}$.
See \cite{mlsi-bobkov, log-sob-diaconis} and Appendix~\ref{app-sec-1} for more details. 

We start with the Poincar\'e constant. 
\begin{theorem}
  Assume Markov chain $\mathcal{C}$ is reversible and $\epsilon\le 1$. Let
  $\lambda_\epsilon, \bar{\lambda}$ be the Poincar\'e
  constants of Markov chain $\mathcal{C}$ and $\bar{\mathcal{C}}$ corresponding to infinitesimal generator $Q$ and
  $\bar{Q}$, respectively. We have 
\begin{align*}
  \frac{\bar{\lambda}}{(1 + \epsilon^{\frac{1}{2}})^2} \Big(1 - 
  \frac{2\bar{\lambda}\,\epsilon^{\frac{1}{2}}}{\lambda_{\min}}\Big)
- \frac{\bar{\lambda}Q_\infty}{\lambda_{\min}} \epsilon^{\frac{1}{2}} 
\le \lambda_\epsilon \le \bar{\lambda}\,.
\end{align*}
\label{thm-1}
\end{theorem}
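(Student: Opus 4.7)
For $\lambda_\epsilon \le \bar\lambda$, I would test definition~(\ref{poincare-const}) with a function lifted from the reduced chain. Let $\bar g: \bar X \to \mathbb{R}$ attain $\bar\lambda$ and define $f(x) := \bar g(i)$ for $x \in X_i$. Because $f$ is constant on each block, the $Q_0$--part of~(\ref{dirichlet-form-f-f-multiscale}) vanishes, and using $\pi(x) = w(i)\pi_i(x)$ together with~(\ref{q-bar}) the surviving inter-block sum equals $\bar{\mathcal{E}}(\bar g, \bar g)$; a parallel computation gives $\mbox{Var}_\pi f = \mbox{Var}_w \bar g$, so the Rayleigh quotient of $f$ equals $\bar\lambda$.

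\textbf{Lower bound: setup.} For the harder direction, I would take any non-constant $f: X \to \mathbb{R}$ and decompose $f = f_1 + f_2$, where $f_1(x) := \widetilde f(i)$ for $x \in X_i$ is the block-constant projection onto the block averages and $f_2 := f - f_1$ has zero $\pi_i$--mean on each block. This orthogonality yields the exact variance splitting $\mbox{Var}_\pi f = \mbox{Var}_w \widetilde f + \mathbf{E}_\pi f_2^2$. The intra-block ($Q_0$) part of~(\ref{dirichlet-form-f-f-multiscale}) only sees $f_2$, and applying the Poincar\'e inequality on each sub-chain $\mathcal{C}_i$ produces the crucial estimate $\mathbf{E}_\pi f_2^2 \le (\epsilon/\lambda_{\min})\,\mathcal{E}_\epsilon(f,f)$.

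\textbf{Key bound.} The heart of the proof is an inequality of the form $\bar{\mathcal{E}}(\widetilde f, \widetilde f) \le C_1(\epsilon, s)\,\mathcal{E}_\epsilon(f,f)$. Writing $\widetilde f(j) - \widetilde f(i) = (f(y) - f(x)) - (f_2(y) - f_2(x))$ for $x \in X_i$, $y \in X_j$ and applying the pointwise Young inequality $(a-b)^2 \le (1+s)a^2 + (1+s^{-1})b^2$ to each summand in the representation of $\bar{\mathcal{E}}(\widetilde f, \widetilde f)$ obtained through~(\ref{q-bar}) yields $\bar{\mathcal{E}}(\widetilde f, \widetilde f) \le (1+s)\,\mathcal{E}_{Q_1}(f,f) + (1+s^{-1})\,\mathcal{E}_{Q_1}(f_2, f_2)$, where $\mathcal{E}_{Q_1}$ denotes the $Q_1$ (inter-block) term of~(\ref{dirichlet-form-f-f-multiscale}). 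A short computation based on the detailed balance condition~(\ref{detail-balance-c}) and the definition of $Q_\infty$ gives $\mathcal{E}_{Q_1}(f_2, f_2) \le Q_\infty\,\mathbf{E}_\pi f_2^2$; together with $\mathcal{E}_{Q_1}(f,f) \le \mathcal{E}_\epsilon(f,f)$ and the set-up bound this becomes $\bar{\mathcal{E}}(\widetilde f, \widetilde f) \le \big[(1+s) + (1+s^{-1}) Q_\infty \epsilon / \lambda_{\min}\big]\,\mathcal{E}_\epsilon(f,f)$.

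\textbf{Conclusion.} Feeding this into the reduced Poincar\'e inequality $\bar\lambda\,\mbox{Var}_w \widetilde f \le \bar{\mathcal{E}}(\widetilde f, \widetilde f)$ and adding the bound on $\mathbf{E}_\pi f_2^2$ via the exact variance splitting gives $\mbox{Var}_\pi f \le C(\epsilon, s)\,\mathcal{E}_\epsilon(f,f)$ with an explicit $C(\epsilon, s)$, hence $\lambda_\epsilon \ge 1/C(\epsilon, s)$. Taking $s = \epsilon^{1/2}$ balances the two error terms, and elementary estimates such as $1/(1+x) \ge 1-x$ convert the resulting fraction into the stated form with the factor $(1+\epsilon^{1/2})^{-2}$ and the additive correction $\bar\lambda Q_\infty \epsilon^{1/2}/\lambda_{\min}$. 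The main obstacle is the algebraic bookkeeping required to keep the bound non-trivial uniformly for $\epsilon \le 1$ while recovering $\bar\lambda$ in the limit $\epsilon \to 0$; in particular one has to handle the case $\mbox{Var}_w \widetilde f = 0$ (where $f = f_2$) separately using the much stronger estimate $\mathcal{E}_\epsilon(f,f) \ge (\lambda_{\min}/\epsilon)\,\mbox{Var}_\pi f$.
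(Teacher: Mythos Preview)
Your proposal is correct and follows essentially the same strategy as the paper: the same upper-bound test function, the same block decomposition $f=f_1+f_2$ with $f_1(x)=\widetilde f(i)$, the same use of the intra-block Poincar\'e inequalities to control $\mathbf{E}_\pi f_2^2$, and the same Young-type inequality (with parameter $s=\epsilon^{1/2}$) to compare $\bar{\mathcal{E}}(\widetilde f,\widetilde f)$ with the inter-block part of $\mathcal{E}_\epsilon(f,f)$.

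The one organisational difference worth noting is that the paper works with an \emph{extremal} $f$ and uses $\mathcal{E}_\epsilon(f,f)\le\bar\lambda\,\mbox{Var}_\pi f$ to express every error term as a multiple of $\mbox{Var}_\pi f$; it then lower-bounds $\mathcal{E}_\epsilon(f,f)$ directly. You instead keep $f$ arbitrary and bound $\mbox{Var}_\pi f$ from above by a multiple of $\mathcal{E}_\epsilon(f,f)$, obtaining a Rayleigh-quotient inequality valid for every $f$. Your route is slightly cleaner (and makes the separate treatment of the case $\mbox{Var}_w\widetilde f=0$ unnecessary, since $\bar\lambda\,\mbox{Var}_w\widetilde f\le\bar{\mathcal{E}}(\widetilde f,\widetilde f)$ holds trivially there), whereas the paper's route makes it easier to land on the exact displayed constants, in particular the $(1+\epsilon^{1/2})^{-2}$ prefactor. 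After your final step $\lambda_\epsilon\ge 1/C(\epsilon,\epsilon^{1/2})$ the bound you obtain has the same $\bar\lambda-O(\epsilon^{1/2})$ behaviour but will not match the stated expression symbol-for-symbol without first invoking $\lambda_\epsilon\le\bar\lambda$ to trade some $\mathcal{E}_\epsilon$--errors for $\mbox{Var}_\pi f$--errors, as the paper does.
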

\begin{proof}
Recall the Poincar\'e constant defined in (\ref{poincare-const})
\begin{align}
    &  \lambda_\epsilon =
  \inf_f\Big\{\frac{\mathcal{E}_\epsilon(f,f)}{\mbox{Var}_\pi f} 
  ~\Big|~\mbox{Var}_\pi f> 0\,,\, f : X \rightarrow \mathbb{R}\Big\}\,,
  \label{poincare-const-repeat}
\end{align}
and the Dirichlet form $\mathcal{E}_\epsilon$ in (\ref{dirichlet-form-f-g-multiscale}).
\begin{enumerate}
  \item
  First choose function $f : X \rightarrow \mathbb{R}$, s.t. $f(x) = g(i)$
  for $x \in X_i$, where $g$ is a function on $\bar{X}$.
  Using the fact $\pi(x) = \pi_i(x) w(i)$ when $x \in X_i$, 
from (\ref{dirichlet-form-f-g-multiscale}) we know
\begin{align*}
  \mathcal{E}_\epsilon(f,f) = \frac{1}{2} \sum_{1\le i,j \le m} (g(j) - g(i))^2
  \bar{Q}(i,j) w(i) = \bar{\mathcal{E}} (g,g)\,.
\end{align*}
It is also straightforward to check $\mathbf{E}_\pi f = \mathbf{E}_w g$ and
$\mbox{Var}_\pi f = \mbox{Var}_w g$. Allowing $g$ to vary among all functions from
$\bar{X}$ to $\mathbb{R}$, we obtain
$\lambda_\epsilon \le \bar{\lambda}$, 
i.e. the upper bound of the theorem. 
\item 
For the lower bound, we assume the minimum in (\ref{poincare-const-repeat}) is obtained by
function $f$, i.e.
\begin{align*}
  \frac{\mathcal{E}_\epsilon(f,f)}{\mbox{Var}_\pi f} = \lambda_\epsilon  \le \bar{\lambda}\,.
\end{align*}
The estimation of $\lambda_\epsilon$ can be obtained if we could estimate
$\mathcal{E}_\epsilon(f,f)$ and the variance of $f$.

From (\ref{dirichlet-form-f-g-multiscale}), we easily obtain 
\begin{align}
  \frac{1}{2\epsilon} \sum_{i=1}^m\Big[\sum_{x, x' \in X_i} \big(f(x') -
f(x)\big)^2 Q_{0,i}(x,x') \pi_i(x)\Big] w(i) \le \bar{\lambda} \mbox{Var}_\pi f\,.
  \label{eps-term-1}
\end{align}
Applying Poincar\'e inequality to Markov chain $\mathcal{C}_i$ for each fixed
$i$, we obtain
\begin{align}
  \sum_{i=1}^m \Big[\sum_{x\in X_i} \Big(f(x) - \widetilde{f}(i) \Big)^2\pi_i(x)\Big]w(i) 
  \le \frac{\bar{\lambda}}{\lambda_{\min}} \mbox{Var}_\pi f \, \epsilon\,,
  \label{f-tildef-ineq}
\end{align}
where $\widetilde{f}(i) = \sum\limits_{x\in X_i}f(x)\pi_i(x)$.
 Using the elementary inequality
\begin{align}
  &(1 +\epsilon^{\frac{1}{2}}) c^2 \ge (a + b + c)^2 - 2(1 +
  \epsilon^{-\frac{1}{2}}) (a^2 + b^2) \,,\quad \forall a,b,c \in \mathbb{R} \,,
  \label{basic-ineq-1}
\end{align}
and the detailed balance condition (\ref{detail-balance-c}), we can estimate  
the Dirichlet form $\mathcal{E}_\epsilon$ in (\ref{dirichlet-form-f-g-multiscale})
\begin{align}
  & \mathcal{E}_\epsilon(f,f) \notag \\
  \ge & ~\frac{1}{2} \sum_{i \neq j} \sum_{x \in X_i,\,y \in
X_j} \big(f(y) - f(x)\big)^2 Q_1(x,y) \pi(x) \notag  \\
  \ge &
\frac{1}{1 + \epsilon^{\frac{1}{2}}} \frac{1}{2} \sum_{i\neq j} \big(\widetilde{f}(j) -
  \widetilde{f}(i)\big)^2 \bar{Q}(i,j) w(i) \notag \\
  &- 
   \epsilon^{-\frac{1}{2}} \sum_{i \neq j} \sum_{x\in X_i,\,y\in X_j}
    \Big[\big(f(x) - \widetilde{f}(i)\big)^2 +
    \big(\widetilde{f}(j)-f(y)\big)^2\Big] Q_1(x,y) \pi(x) \notag \\
  = &
    \frac{1}{1 + \epsilon^{\frac{1}{2}}} \frac{1}{2} \sum_{i\neq j} \big(\widetilde{f}(j) -
  \widetilde{f}(i)\big)^2 \bar{Q}(i,j) w(i)
  - 2 \epsilon^{-\frac{1}{2}} \sum_{i \neq j} \sum_{x\in X_i,\,y\in X_j}
    \big(f(x) - \widetilde{f}(i)\big)^2 Q_1(x,y) \pi(x)\,. \label{dirichlet-ineq} 
\end{align}
Recalling the definition of $Q_\infty$ in (\ref{Q-inf})
and applying (\ref{f-tildef-ineq}), we can estimate the second term on the
right hand side of (\ref{dirichlet-ineq}) 
 and obtain 
\begin{align}
  & \sum_{i \neq j} \sum_{x\in X_i,\,y \in X_j}
    \big(f(x) - \widetilde{f}(i)\big)^2 Q_1(x,y) \pi(x)\notag \\
    \le & \frac{Q_\infty}{2}
    \sum_{i=1}^m \sum_{x \in X_i}
    (f(x) - \widetilde{f}(i))^2  \pi(x)
    \le \frac{\bar{\lambda}Q_\infty}{2\lambda_{\min}} \mbox{Var}_\pi f \,
    \epsilon\,.
    \label{d-form-1st-term-ineq}
\end{align}
To estimate the variance of $f$, we apply inequality (\ref{f-tildef-ineq}),
the elementary inequality
\begin{align*}
(a+b)^2 \le (1 + \epsilon^{-\frac{1}{2}}) a^2 + (1 + \epsilon^{\frac{1}{2}})
b^2\,, \quad \forall a, b \in \mathbb{R}\,,
\end{align*}
together with the Poincar\'e inequality for the reduced Markov chain
$\bar{\mathcal{C}}$. It gives 
\begin{align}
  \mbox{Var}_\pi f = & \sum_{x \in X} \Big(f(x) - \sum_{x' \in X}
  f(x')\pi(x')\Big)^2 \pi(x) \notag \\
  = & \sum_{x \in X} \Big(f(x) - \sum_{i=1}^m \widetilde{f}(i)w(i)\Big)^2 \pi(x) \notag\\
\le & (1 + \epsilon^{-\frac{1}{2}}) \sum_{i=1}^m\sum_{x\in X_i} \Big(f(x) -
  \widetilde{f}(i)\Big)^2
  \pi(x) + (1 + \epsilon^{\frac{1}{2}}) \sum_{i=1}^m \Big(\widetilde{f}(i) -
  \sum_{j=1}^m\widetilde{f}(j)w(j)\Big)^2 w(i) \notag\\
  \le & (1 +
  \epsilon^{-\frac{1}{2}})\frac{\bar{\lambda}\,\epsilon}{\lambda_{\min}}
  \mbox{Var}_\pi f + \frac{1}{2}(1 + \epsilon^{\frac{1}{2}}) \bar{\lambda}^{-1}
  \sum_{1 \le i,j \le m} \Big(\widetilde{f}(j) - \widetilde{f}(i)\Big)^2
 \bar{Q}(i,j) w(i)\,. \label{variance-ineq}
\end{align}
Combining (\ref{dirichlet-ineq})--(\ref{variance-ineq}), we arrive at
\begin{align*}
\mathcal{E}_\epsilon(f,f) \ge 
\frac{\bar{\lambda}}{(1 + \epsilon^{\frac{1}{2}})^2} \mbox{Var}_\pi f \Big[1 - (1 +
\epsilon^{-\frac{1}{2}})\frac{\bar{\lambda}\,\epsilon}{\lambda_{\min}}\Big]
- \epsilon^{\frac{1}{2}}
\frac{\bar{\lambda}Q_\infty}{\lambda_{\min}} \mbox{Var}_\pi f  \,,
\end{align*}
which implies  
\begin{align*}
\lambda_\epsilon \ge 
\frac{\bar{\lambda}}{(1 + \epsilon^{\frac{1}{2}})^2} \Big(1 - 
  \frac{2\bar{\lambda}\,\epsilon^{\frac{1}{2}}}{\lambda_{\min}}\Big)
- \frac{\bar{\lambda}Q_\infty}{\lambda_{\min}} \epsilon^{\frac{1}{2}} \,
\end{align*}
when $\epsilon \le 1$.
\end{enumerate}
\end{proof}

We continue to study the logarithmic Sobolev constant.
\begin{theorem}
  Assume Markov chain $\mathcal{C}$ is reversible. Let
  $\alpha_\epsilon, \bar{\alpha}$ be the logarithmic Sobolev 
  constants of Markov chain $\mathcal{C}$ and $\bar{\mathcal{C}}$ corresponding to infinitesimal generator $Q$ and
  $\bar{Q}$, respectively. We have 
\begin{align*}
  \frac{\bar{\alpha}}{1 + \epsilon^{\frac{1}{2}}}\Big(1 -
\frac{(\bar{\alpha} + \frac{1}{4} Q_\infty)\epsilon}{\alpha_{\min}} \Big) - 
  \frac{\bar{\alpha} Q_\infty \epsilon^{\frac{1}{2}}}{2\alpha_{\min}}
  \le \alpha_\epsilon \le \bar{\alpha}\,.
\end{align*}
\label{thm-2}
\end{theorem}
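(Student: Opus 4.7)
My plan is to mirror the proof of Theorem~\ref{thm-1}, with variance replaced by relative entropy and Poincar\'e replaced by LSI at each stage. For the upper bound, I would take the test function $f(x)=g(i)$ for $x\in X_i$: since $\pi(x)=\pi_i(x)w(i)$, the fast part of $\mathcal{E}_\epsilon(f,f)$ vanishes, the slow part collapses to $\bar{\mathcal{E}}(g,g)$, and $\mathrm{Ent}_\pi(f^2)=\mathrm{Ent}_w(g^2)$; letting $g$ range over non-constant functions on $\bar{X}$ yields $\alpha_\epsilon\le\bar{\alpha}$.

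For the lower bound, let $f:X\to\mathbb{R}^+$ attain the infimum, so that $\mathcal{E}_\epsilon(f,f)=\alpha_\epsilon\,\mathrm{Ent}_\pi(f^2)\le\bar{\alpha}\,\mathrm{Ent}_\pi(f^2)$. The cornerstone is the tensor-product decomposition of relative entropy, $\mathrm{Ent}_\pi(f^2)=\sum_{i=1}^m\mathrm{Ent}_{\pi_i}(f^2|_{X_i})\,w(i)+\mathrm{Ent}_w(\widetilde{f^2})$, where $\widetilde{f^2}(i):=\mathbf{E}_{\pi_i}(f^2)$. Applying LSI for each $\mathcal{C}_i$ to the local sum together with the fast-part identity $\sum_i\mathcal{E}_i(f,f)w(i)=\epsilon\,\mathcal{E}_\epsilon^{\mathrm{fast}}(f,f)\le\epsilon\,\mathcal{E}_\epsilon(f,f)\le\bar{\alpha}\epsilon\,\mathrm{Ent}_\pi(f^2)$ gives $\sum_i\mathrm{Ent}_{\pi_i}(f^2)w(i)\le\frac{\bar{\alpha}\epsilon}{\alpha_{\min}}\mathrm{Ent}_\pi(f^2)$. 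For the global part, I would apply LSI for $\bar{\mathcal{C}}$ to $g:=\sqrt{\widetilde{f^2}}$ to obtain $\mathrm{Ent}_w(\widetilde{f^2})\le\frac{1}{\bar{\alpha}}\bar{\mathcal{E}}(g,g)$, and then bound $\bar{\mathcal{E}}(g,g)$ by the slow Dirichlet form by applying the elementary inequality~(\ref{basic-ineq-1}) \emph{directly} to the telescoping $g(j)-g(i)=(g(j)-f(y))+(f(y)-f(x))+(f(x)-g(i))$ and integrating against $Q_1(x,y)\pi(x)$; the main contribution is $(1+\epsilon^{1/2})\,\mathcal{E}_\epsilon^{\mathrm{slow}}(f,f)$.

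The technical core is handling the cross-terms, which reduce (via detailed balance and the row-sum bound $|Q_1(x,x)|\le Q_\infty/2$) to $\sum_i w(i)\sum_{x\in X_i}(f(x)-g(i))^2\pi_i(x)$. I would use the identity $\sum_{x\in X_i}(f(x)-g(i))^2\pi_i(x)=2g(i)(g(i)-\widetilde{f}(i))$ together with the inequality $(\sqrt{a}-b)^2\le a-b^2$, valid for $0\le b\le\sqrt{a}$ (applied with $a=\widetilde{f^2}(i)$, $b=\widetilde{f}(i)$), to conclude the cross-term is bounded by $2\,\mathrm{Var}_{\pi_i}(f)$. Poincar\'e for $\mathcal{C}_i$---legitimate since (\ref{order-diff-const-reversible}) gives $\lambda_i\ge 2\alpha_i$---then yields $\sum_i\mathrm{Var}_{\pi_i}(f)w(i)\le\frac{\epsilon}{2\alpha_{\min}}\mathcal{E}_\epsilon^{\mathrm{fast}}(f,f)\le\frac{\bar{\alpha}\epsilon}{2\alpha_{\min}}\mathrm{Ent}_\pi(f^2)$, which is the source of the $\tfrac{1}{4}Q_\infty$ coefficient in the theorem. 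Inserting all three estimates into the entropy decomposition, collecting coefficients of $\mathrm{Ent}_\pi(f^2)$ and $\mathcal{E}_\epsilon(f,f)$, and rearranging to isolate $\alpha_\epsilon=\mathcal{E}_\epsilon(f,f)/\mathrm{Ent}_\pi(f^2)$ produces the claimed lower bound. The main obstacle, I expect, is the bookkeeping of the $(1+\epsilon^{1/2})$ factor: to keep a single factor $(1+\epsilon^{1/2})^{-1}$ in the theorem rather than the $(1+\epsilon^{1/2})^{-2}$ that the Poincar\'e proof's two-step iteration through $\bar{\mathcal{E}}(\widetilde{f},\widetilde{f})$ would produce, one must use the direct one-shot comparison of $g(j)-g(i)$ with $f(y)-f(x)$ above, which is the reason the auxiliary inequality $(\sqrt{a}-b)^2\le a-b^2$ is indispensable.
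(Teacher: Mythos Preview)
Your overall strategy matches the paper's exactly: the same upper-bound test functions, the same entropy tensorisation $\mathrm{Ent}_\pi(f^2)=\sum_i\mathrm{Ent}_{\pi_i}(f^2)w(i)+\mathrm{Ent}_w(F^2)$ with $F(i)^2=\mathbf{E}_{\pi_i}(f^2)$, local LSI on the first piece, global LSI on the second, and the elementary inequality~(\ref{basic-ineq-1}) to relate slow Dirichlet forms. The one substantive difference is \emph{which} block-constant function you use in the telescoping. The paper telescopes $f(y)-f(x)$ through the means $\widetilde f(i)=\mathbf{E}_{\pi_i}f$, obtaining $\bar{\mathcal{E}}(\widetilde f,\widetilde f)$, and only afterwards compares $\bar{\mathcal{E}}(F,F)$ with $\bar{\mathcal{E}}(\widetilde f,\widetilde f)$ via the algebraic bound $(F(j)-F(i))^2\le(\widetilde f(j)-\widetilde f(i))^2+(F(i)^2-\widetilde f(i)^2)+(F(j)^2-\widetilde f(j)^2)$; this second step costs only $O(\epsilon)$ and is what produces the $\tfrac{1}{4}Q_\infty$. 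You instead telescope through $g(i)=F(i)$ directly.

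Your route is valid but does not reproduce the stated constants. The cross terms now involve $\sum_{x\in X_i}(f(x)-F(i))^2\pi_i(x)=2F(i)(F(i)-\widetilde f(i))\le 2\,\mathrm{Var}_{\pi_i}(f)$, twice the paper's $\sum_{x}(f(x)-\widetilde f(i))^2\pi_i(x)=\mathrm{Var}_{\pi_i}(f)$. Carrying this through yields
\[
\alpha_\epsilon \;\ge\; \frac{\bar\alpha}{1+\epsilon^{1/2}}\Big(1-\frac{(\bar\alpha+Q_\infty)\epsilon}{\alpha_{\min}}\Big)-\frac{\bar\alpha Q_\infty\,\epsilon^{1/2}}{(1+\epsilon^{1/2})\alpha_{\min}},
\]
with $Q_\infty$ in place of $\tfrac{1}{4}Q_\infty$ and $(1+\epsilon^{1/2})^{-1}$ in place of $\tfrac{1}{2}$. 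So your claim that the method ``produces the claimed lower bound'' is off in the explicit constants; you get the same $O(\epsilon^{1/2})$ convergence, only with weaker coefficients. (Incidentally, the inequality $(\sqrt a-b)^2\le a-b^2$ is not what is needed for $2F(i)(F(i)-\widetilde f(i))\le 2\,\mathrm{Var}_{\pi_i}(f)$; the latter is simply $F(i)\le F(i)+\widetilde f(i)$, i.e.\ $\widetilde f(i)\ge 0$, which follows from $f\ge 0$.)
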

\begin{proof}
Recall the logarithmic Sobolev constant defined in (\ref{log-sobolev-const})
\begin{align}
  \alpha_\epsilon =
  \inf_{f}\Big\{\frac{\mathcal{E}_\epsilon(f,f)}{\mbox{Ent}_\pi(f^2)}\,~\Big|~
  \mbox{Ent}_\pi(f^2) > 0\,,\, f : X \rightarrow \mathbb{R}\Big\}\,.
  \label{log-sobolev-const-repeat}
\end{align}
\begin{enumerate}
  \item
  The upper bound follows directly by considering functions
  $f(x) = g(i)$ when $x \in X_i$, $g : \bar{X}\rightarrow \mathbb{R}$, and
  noticing that $\mbox{Ent}_\pi(f^2) = \mbox{Ent}_w(g^2)$,
  $\mathcal{E}_\epsilon(f,f) = \bar{\mathcal{E}}(g,g)$. 
\item 
For the lower bound, 
we assume the minimum in (\ref{log-sobolev-const-repeat}) is achieved with
function $f$, i.e.
\begin{align*}
  \alpha_\epsilon = \frac{\mathcal{E}_\epsilon(f,f)}{\mbox{Ent}_\pi (f^2)} \le \bar{\alpha}.
\end{align*} 
Estimation of $\alpha_\epsilon$ can be obtained if we could estimate both
the numerator and denominator.
For the Dirichlet form $\mathcal{E}_\epsilon(f,f)$, from (\ref{dirichlet-form-f-g-multiscale}) we have 
\begin{align*}
  \frac{1}{2\epsilon} \sum_{i = 1}^m \sum_{x, x' \in X_i} \big(f(x') - f(x)\big)^2
  Q_{0,i}(x,x') \pi(x) \le \bar{\alpha}\mbox{Ent}_\pi(f^2)\,.
\end{align*}
Applying Poincar\'e inequality and
logarithmic Sobolev inequality for Markov chain $\mathcal{C}_i$ for each fixed $i$, and noticing
$2\alpha_i \le \lambda_i$ (see (\ref{order-diff-const-reversible})), we obtain
\begin{align}
  \begin{split}
  & \sum_{i=1}^m \Big[\sum_{x \in X_i} \Big(f(x) -
  \widetilde{f}(i)\Big)^2\pi_i(x)\Big]w(i) 
  \le \frac{\bar{\alpha}}{2\alpha_{\min}} \mbox{Ent}_\pi(f^2) \,
  \epsilon\,, \\
  & 
  \sum_{i=1}^m \Big[\sum_{x \in X_i} |f(x)|^2 \ln\frac{|f(x)|^2}{F(i)^2}
  \pi_i(x)\Big] w(i) \le \frac{\bar{\alpha}}{\alpha_{\min}}
  \mbox{Ent}_\pi(f^2)\,\epsilon \,,
\end{split}
\label{poincare-log-sobolev}
\end{align}
respectively. In the above, $F(i)^2 = |f(i, \cdot)|^2_{2,\pi_i}=\sum\limits_{x \in X_i} |f(x)|^2 \pi_i(x)$ and we have
$|\widetilde{f}(i)| \le F(i)$, $1\le i \le m$. We proceed similarly as in the proof of Theorem~\ref{thm-1}. 
 Applying the detailed balance
condition (\ref{detail-balance-c}), inequalities (\ref{basic-ineq-1}) and (\ref{poincare-log-sobolev})
to the Dirichlet form (\ref{dirichlet-form-f-g-multiscale}), we can obtain
\begin{align}
  & \mathcal{E}_\epsilon(f,f)\\
  \ge & \frac{1}{2} 
  \sum_{1 \le i \neq j\le m}\sum_{x \in X_i, y \in X_j} \Big(f(y) - f(x)\Big)^2 Q_1(x,y)
  \pi(x) \notag \\
  \ge & \frac{1}{1 + \epsilon^{\frac{1}{2}}} \frac{1}{2} \sum_{1 \le i \neq j\le m} 
  \Big(\widetilde{f}(j) - \widetilde{f}(i)\Big)^2 \bar{Q}(i,j) w(i) 
  - 2\epsilon^{-\frac{1}{2}} \sum_{i=1}^m
  \sum_{x \in X_i, y \not\in X_i} \Big(f(x) - \widetilde{f}(i)\Big)^2 Q_1(x,y) \pi(x) \notag \\
  \ge & \frac{1}{1 + \epsilon^{\frac{1}{2}}} \frac{1}{2} \sum_{1 \le i \neq j\le m} 
  \Big(\widetilde{f}(j) - \widetilde{f}(i)\Big)^2 \bar{Q}(i,j) w(i) 
  - Q_\infty \epsilon^{-\frac{1}{2}} \sum_{i=1}^m
  \sum_{x \in X_i} \Big(f(x) - \widetilde{f}(i)\Big)^2 \pi(x) \notag \\
  \ge & \frac{1}{1 + \epsilon^{\frac{1}{2}}} \frac{1}{2} \sum_{1 \le i \neq j\le m} 
  \Big(\widetilde{f}(j) - \widetilde{f}(i)\Big)^2 \bar{Q}(i,j) w(i) 
  - \frac{\bar{\alpha}Q_\infty\mbox{Ent}_\pi(f^2)
  \epsilon^{\frac{1}{2}}}{2\alpha_{\min}}\,. 
\label{dirichlet-ineq-log}
\end{align}
Now we estimate $\mbox{Ent}_\pi(f^2)$. Using
(\ref{poincare-log-sobolev}), the logarithmic Sobolev
inequality for the reduced Markov chain $\bar{\mathcal{C}}$, and 
noticing $|f|^2_{2,\pi} = |F|^2_{2,w}$, we have 
\begin{align}
  \mbox{Ent}_\pi(f^2) = & \sum_{i=1}^m\sum_{x \in X_i} |f(x)|^2 \ln \frac{|f(x)|^2}{|F(i)|^2}
  \pi_i(x) w(i)
  + \sum_{i=1}^m |F(i)|^2 \ln \frac{|F(i)|^2}{|f|^2_{2,\pi}} w(i) \notag \\
  \le & \frac{\bar{\alpha}}{\alpha_{\min}} \mbox{Ent}_\pi(f^2)\,\epsilon + \frac{1}{2\bar{\alpha}} 
  \sum_{1 \le i,j \le m} |F(j) - F(i)|^2 \bar{Q}(i,j) w(i)\,.
  \label{l-entropy-term}
\end{align}
The first inequality in (\ref{poincare-log-sobolev}) and the
definition in (\ref{q-bar}) imply
\begin{align*}
  & 0 \le \sum_{i = 1}^m \Big(F(i)^2 - \widetilde{f}(i)^2\Big) w(i) \le 
  \frac{\bar{\alpha}}{2\alpha_{\min}}  \mbox{Ent}_\pi(f^2) \, \epsilon \,, \\
& 0 < \sum_{j\neq i} \bar{Q}(i,j) = \sum_{x \in X_i}\sum_{y \not\in X_i} Q_1(x,y)
\pi_i(x) \le \frac{Q_\infty}{2}\,.
\end{align*}
Then the second term on the right hand side of (\ref{l-entropy-term}) can be
bounded as 
\begin{align*}
  & \sum_{1 \le i,j \le m} \Big|F(j) - F(i)\Big|^2 \bar{Q}(i,j) w(i) \\
  =& \sum_{1 \le i\neq j \le m} \Big(F(i)^2 - 2F(i)F(j) + F(j)^2\Big)
  \bar{Q}(i,j)w(i) \\
  \le & 2 \sum_{1 \le i\neq j\le m} F(i)^2 \bar{Q}(i,j) w(i) - 2
  \sum_{1 \le i\neq j\le m}\widetilde{f}(i)\widetilde{f}(j) \bar{Q}(i,j) w(i) \\
  = & \sum_{1 \le i,j \le m} \Big(\widetilde{f}(j) - \widetilde{f}(i)\Big)^2 \bar{Q}(i,j)
  w(i) + 2 \sum_{1 \le i\neq j \le m} \Big(F(i)^2 - \widetilde{f}(i)^2\Big) \bar{Q}(i,j) w(i)
  \\
\le &\sum_{1 \le i,j \le m} \Big(\widetilde{f}(j) - \widetilde{f}(i)\Big)^2 \bar{Q}(i,j)
  w(i) + 
  \frac{Q_\infty\bar{\alpha}}{2\alpha_{\min}} \mbox{Ent}_\pi(f^2) \, \epsilon \,,
\end{align*}
where the detailed balance condition (\ref{detail-balance-c-bar}) for Markov
chain $\bar{\mathcal{C}}$ has been used. 

Therefore (\ref{l-entropy-term}) implies
\begin{align*}
  \mbox{Ent}_\pi(f^2) \le 
 \frac{\bar{\alpha}}{\alpha_{\min}}
 \mbox{Ent}_\pi(f^2)\,\epsilon + \frac{1}{2\bar{\alpha}} \Big[
\sum_{1 \le i,j \le m} \Big(\widetilde{f}(j) - \widetilde{f}(i)\Big)^2 \bar{Q}(i,j)
w(i) + \frac{Q_\infty\bar{\alpha}}{2\alpha_{\min}}  \mbox{Ent}_\pi(f^2) \,
\epsilon  \Big]\,,
\end{align*}
or equivalently
\begin{align*}
  \frac{1}{2} \sum_{1 \le i,j \le m} \Big(\widetilde{f}(j) - \widetilde{f}(i)\Big)^2 \bar{Q}(i,j) w(i) 
  \ge \bar{\alpha} \mbox{Ent}_\pi(f^2) \Big(1 - \frac{(\bar{\alpha} +
  \frac{1}{4}Q_\infty)\epsilon}{\alpha_{\min}} \,\Big)\,.
\end{align*}
Substituting the above inequality into (\ref{dirichlet-ineq-log}), we obtain
\begin{align*}
  \mathcal{E}_\epsilon(f,f) \ge \frac{\bar{\alpha}\mbox{Ent}_\pi(f^2)}{1+\epsilon^{\frac{1}{2}}}
  \Big(1 - 
  \frac{(\bar{\alpha} + \frac{1}{4} Q_\infty)\epsilon}{\alpha_{\min}} 
  \Big) - 
 \frac{\bar{\alpha}Q_\infty \mbox{Ent}_\pi(f^2)\epsilon^{\frac{1}{2}}
}{2\alpha_{\min}}\,,
\end{align*}
which indicates the lower bound. 
\end{enumerate}
\end{proof}

Finally, we study the modified logarithmic Sobolev constant.
\begin{theorem} Let $\gamma_\epsilon, \bar{\gamma}$ be the modified
  logarithmic Sobolev 
  constants of the reversible Markov chain $\mathcal{C}$ and $\bar{\mathcal{C}}$ corresponding to infinitesimal generator $Q$ and
  $\bar{Q}$, respectively. We have 
  \begin{align}
  \lim_{\epsilon \rightarrow 0} \gamma_{\epsilon} = \bar{\gamma}\,.
\end{align}
\label{thm-3}
\end{theorem}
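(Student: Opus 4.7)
The upper bound $\gamma_\epsilon \le \bar{\gamma}$ is obtained, as in Theorems~\ref{thm-1} and~\ref{thm-2}, by using test functions $f : X \to \mathbb{R}^+$ of the form $f(x) = g(i)$ for $x \in X_i$, with $g : \bar{X} \to \mathbb{R}^+$ non-constant. A direct check using $\pi(x) = \pi_i(x) w(i)$ and the definition (\ref{q-bar}) of $\bar{Q}$ shows $\mathcal{E}_\epsilon(f, \ln f) = \bar{\mathcal{E}}(g, \ln g)$ and $\mbox{Ent}_\pi(f) = \mbox{Ent}_w(g)$, so taking the infimum over $g$ yields $\limsup_{\epsilon \to 0} \gamma_\epsilon \le \bar{\gamma}$.

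For the lower bound my plan is to combine an additive decomposition of the Dirichlet form and the entropy with a compactness--contradiction argument. Using the bilinear reversible identity analogous to (\ref{dirichlet-form-f-g-multiscale}), write
\begin{align*}
\mathcal{E}_\epsilon(f, \ln f) = \frac{1}{\epsilon}\sum_{i=1}^m w(i)\, \mathcal{E}_i(f|_{X_i}, \ln f|_{X_i}) + \mathcal{E}^{(1)}(f, \ln f),
\end{align*}
where $\mathcal{E}^{(1)}$ collects the inter-cluster $Q_1$-contributions. Correspondingly,
\begin{align*}
\mbox{Ent}_\pi(f) = \sum_{i=1}^m w(i)\, \mbox{Ent}_{\pi_i}(f|_{X_i}) + \mbox{Ent}_w(\widetilde{f}).
\end{align*}
The MLSI for each $\mathcal{C}_i$ applied to $f|_{X_i}$ gives $\mathcal{E}_i(f|_{X_i}, \ln f|_{X_i}) \ge \gamma_{\min}\, \mbox{Ent}_{\pi_i}(f|_{X_i})$.

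For the lower bound itself I argue by contradiction. Suppose $\liminf_{\epsilon \to 0} \gamma_\epsilon = \gamma^* < \bar{\gamma}$ along a sequence $\epsilon_k \to 0$, and let $f^k$ realize the infimum in (\ref{mlsi-def}) at $\epsilon = \epsilon_k$; minimizers exist and must satisfy $f^k > 0$ on $X$, since otherwise a pair $f^k(x) = 0$, $f^k(y) > 0$ with $Q_1(x,y) > 0$ would force $\mathcal{E}_{\epsilon_k}(f^k, \ln f^k) = \infty$. Using scale invariance of the MLSI ratio, normalize $\mathbf{E}_\pi f^k = 1$, so that $\widetilde{f}^k(i) \in [0, 1/w(i)]$ and a subsequence satisfies $\widetilde{f}^k \to \widetilde{f}^*$. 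Inserting the two decompositions into $\mathcal{E}_{\epsilon_k}(f^k, \ln f^k) = \gamma_{\epsilon_k}\mbox{Ent}_\pi(f^k)$ and using the intra-cluster MLSI bound gives $\sum_i w(i)\, \mbox{Ent}_{\pi_i}(f^k|_{X_i}) \le \frac{\gamma_{\epsilon_k}\epsilon_k}{\gamma_{\min}}\mbox{Ent}_\pi(f^k)$, so intra-cluster entropy becomes a vanishing fraction of the total entropy and $f^k$ is asymptotically constant on each cluster with value $\widetilde{f}^*(i)$. Passing to the limit in $\mathcal{E}^{(1)}(f^k, \ln f^k)/\mbox{Ent}_\pi(f^k)$ then produces, in the non-degenerate case, a test function $\widetilde{f}^*$ on $\bar{X}$ witnessing $\bar{\gamma} \le \gamma^*$, contradicting $\gamma^* < \bar{\gamma}$.

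The main obstacle is handling two potentially degenerate limits. First, some component $\widetilde{f}^*(i)$ may vanish, in which case $\ln f^k$ diverges on $X_i$ and the continuity needed to pass to the limit fails; I would handle this by a regularization, replacing $f^k$ by $f^k + \delta$ and letting $k \to \infty$ before $\delta \to 0$, using monotonicity of the relevant quantities in the shift. Second, $\widetilde{f}^*$ could equal the constant $1$, in which case the denominator $\mbox{Ent}_w(\widetilde{f}^*) = 0$; this is resolved by rescaling $\widetilde{f}^k = 1 + s^k h^k$ with $|h^k|_{2,w} = 1$, $s^k \to 0$, and expanding both $(a-b)(\ln a - \ln b)$ and the entropy to leading order in $s^k$, reducing the problem to a quadratic variational one on $\bar{X}$ controlled by $\bar{\lambda} \ge \bar{\gamma}/2$ from (\ref{order-diff-const-bar-reversible}). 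An alternative, more quantitative route would be to establish an inequality parallel to (\ref{dirichlet-ineq-log}) in Theorem~\ref{thm-2}, controlling $\bar{\mathcal{E}}(\widetilde{f}, \ln \widetilde{f}) - \mathcal{E}^{(1)}(f, \ln f)$ by a small multiple of $\sum_i w(i)\, \mbox{Ent}_{\pi_i}(f|_{X_i})$; the principal technical difficulty there is replacing the elementary quadratic inequality (\ref{basic-ineq-1}) by one adapted to the nonlinear $(a-b)(\ln a - \ln b)$ structure.
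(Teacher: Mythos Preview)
Your proposal is essentially the same compactness--contradiction argument as the paper's, and it is correct in outline. A few points of comparison are worth noting.

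First, the paper extracts a pointwise limit of the full sequence $f_k$ on $X$ (using $0<f_k\le\pi_{\min}^{-1}$ on a finite space), rather than only of the cluster averages $\widetilde{f}^k$; this immediately gives a cluster-constant limit once the intra-cluster entropies are shown to vanish via the Csisz\'ar--Kullback--Pinsker inequality. Your route via $\widetilde{f}^k$ works too, but you should make explicit that $f_k$ itself is bounded and hence has a pointwise convergent subsequence, since passing to the limit in $\mathcal{E}^{(1)}(f^k,\ln f^k)$ requires the values $f^k(x)$, not just $\widetilde{f}^k(i)$. Also, to turn ``vanishing fraction of total entropy'' into ``vanishing intra-cluster entropy'' you need the total entropy bounded; this follows from $f_k\le\pi_{\min}^{-1}$ giving $\mbox{Ent}_\pi(f_k)\le\ln\pi_{\min}^{-1}$, which the paper records explicitly.

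Second, for the degenerate case $\widetilde{f}^*(i)=0$, the paper argues more directly than your regularization: if the limit $\bar f$ vanishes at some $x\in X_i$ but not at some neighbor $y\in X_j$ with $Q_1(x,y)>0$ (such a pair exists by irreducibility, using that $\bar f$ is cluster-constant), then the single summand $(f_k(y)-f_k(x))(\ln f_k(y)-\ln f_k(x))Q_1(x,y)\pi(x)$ forces $\mathcal{E}_{\epsilon_k}(f_k,\ln f_k)\to+\infty$, contradicting the bound $\gamma_{\epsilon_k}\,\mbox{Ent}_\pi(f_k)\le\bar\gamma\ln\pi_{\min}^{-1}$. Your shift-by-$\delta$ argument can be made to work because $(a-b)(\ln a-\ln b)$ is monotone decreasing under a common shift, so $\mathcal{E}^{(1)}(f^k,\ln f^k)\ge\mathcal{E}^{(1)}(f^k+\delta,\ln(f^k+\delta))$ and one may pass to the limit for fixed $\delta$ and then send $\delta\to0$; but this is less transparent than the paper's one-line blowup.

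For the constant-limit case $\widetilde f^*\equiv1$, your plan coincides with the paper's: write $f_k=1+f_k'$, Taylor expand to reduce to the Poincar\'e ratio, and invoke Theorem~\ref{thm-1} together with $\bar\gamma\le2\bar\lambda$ from (\ref{order-diff-const-bar-reversible}) to obtain the contradiction.
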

\begin{proof}
We argue by contradiction. Suppose the conclusion is not true. First 
recall the modified logarithmic Sobolev constant defined in (\ref{mlsi-def})
\begin{align}
  \gamma_\epsilon = \inf_{f} \Big\{ \frac{\mathcal{E}_\epsilon(f, \ln
  f)}{\mbox{Ent}_{\pi}(f)}~\Big|~ \mbox{Ent}_{\pi}(f) > 0\,, f : X \rightarrow \mathbb{R}^+ \Big\}\,.
    \label{mlsi-def-repeat}
\end{align}
Take functions
$f(x) = g(i)$ for $x \in X_i$,  then it is straightforward to check
$\mathcal{E}_\epsilon(f, \ln f) = \bar{\mathcal{E}}(g,\ln g)$ and
$\mbox{Ent}_\pi(f) = \mbox{Ent}_w(g)$, therefore we can deduce
$\gamma_\epsilon \le
\bar{\gamma}$ by allowing function $g$ to vary among all functions $g : \bar{X}
\rightarrow \mathbb{R}^+$.

 Since $\gamma_\epsilon \le \bar{\gamma}$, we can find a sequence $\epsilon^{(k)}$,
$\lim\limits_{k\rightarrow +\infty} \epsilon^{(k)} = 0$, s.t.
$\lim\limits_{k\rightarrow +\infty} \gamma^{(k)} < \bar{\gamma}$.
Notice that in this proof, we will use notations $\gamma^{(k)}$,
$\mathcal{E}^{(k)}$ instead
of $\gamma_{\epsilon^{(k)}}$ and $\mathcal{E}_{\epsilon^{(k)}}$.
We assume the infima in (\ref{mlsi-def-repeat}) are achieved with functions
$f_k: X \rightarrow \mathbb{R}^+$, i.e.
\begin{align}
  \gamma^{(k)}= \frac{\mathcal{E}^{(k)}(f_k, \ln
  f_k)}{\mbox{Ent}_{\pi} (f_k)}\,, \quad \mathbf{E}_{\pi}
  f_k= \sum_{x \in X} f_k(x) \pi(x) = 1\,.
  \label{mlsi-minimizer}
\end{align}
Let $\pi_{\min} = \min\limits_{x \in X} \pi(x)$. Clearly we have $0 < f_k(x) \le \pi^{-1}_{\min}$, $\forall x \in
X$ (see \cite{mlsi-bobkov} for the positivity), and therefore
\begin{align}
  0 \le \mbox{Ent}_\pi(f_k) \le \ln \frac{1}{\pi_{\min}}\,.
\end{align}
Since $f_k$ are bounded, we further assume they converge to some function
$\bar{f} : X \rightarrow \mathbb{R}$ for each $x \in X$ (This can be achieved
by considering a convergent subsequence).

From Dirichlet form (\ref{dirichlet-form-f-g-multiscale}) and (\ref{mlsi-minimizer}), we have
\begin{align}
  \begin{split}
    & \frac{1}{\epsilon^{(k)}} \sum_{i=1}^m \mathcal{E}_i(f_k(i, \cdot), \ln
    f_k(i, \cdot)) w(i) \\
  = & \frac{1}{2\epsilon^{(k)}} \sum_{i=1}^m\sum_{x,x' \in X_i} (f_k(x') -
  f_k(x)) (\ln
f_k(x') - \ln f_k(x)) Q_{0, i}(x,x') \pi(x) \\
\le & \mathcal{E}^{(k)}(f_k, \ln f_k) = \gamma^{(k)} \mbox{Ent}_\pi (f_k)\, \le 
\bar{\gamma} \ln\frac{1}{\pi_{\min}} \,.
\end{split}
  \label{mlsi-term-1}
\end{align}
Recall $\gamma_i > 0$ is the modified logarithmic Sobolev constant of Markov chain
$\mathcal{C}_i$, together with (\ref{mlsi-term-1}), we can obtain
\begin{align}
  \sum_{i=1}^m \mbox{Ent}_{\pi_i}(f_k(i, \cdot))  w(i) \le
  \frac{\bar{\gamma} \ln \frac{1}{\pi_{\min}} \,
  \epsilon^{(k)}}{\gamma_{\min}}\,.
\end{align}
Let $\mu_i^{(k)}$ be the probability measure on $X_i$ s.t.
$\mu_i^{(k)}(x) = \frac{f_{k}(x)}{\widetilde{f}_{k}(i)}\pi_i(x)$, $\forall x \in
X_i$. Applying Csisz\'{a}r-Kullback-Pinsker inequality, we can obtain
\begin{align}
  & \sum_{i=1}^m \Big[\sum_{x \in X_i}
  |f_k(x) - \widetilde{f}_k(i)| \pi_i(x)\Big] w(i) 
  =  \sum_{i=1}^m \widetilde{f}_k(i) ||\mu_i^{(k)}-
\pi_i||_{\mbox{\tiny{TV}}}\, w(i) \notag \\
\le &  \sum_{i=1}^m \widetilde{f}_k(i) \sqrt{2 \mbox{Ent}_{\pi_i}
\Big(\frac{f_k(i, \cdot)}{\widetilde{f}_k(i)}\Big)} w(i) 
=  \sum_{i=1}^m  \sqrt{2 \widetilde{f}_k(i)\mbox{Ent}_{\pi_i}
\big(f_k(i, \cdot)}\big) w(i) \notag \\
\le& \sqrt{2}  \Big[\sum_{i=1}^m \mbox{Ent}_{\pi_i} \big(f_k(i, \cdot)\big) w(i)
\Big]^{\frac{1}{2}} \le \Big(
  \frac{2\bar{\gamma} \ln \frac{1}{\pi_{\min}} \,
  \epsilon^{(k)}}{\gamma_{\min}}\,\Big)^{\frac{1}{2}}\,,\label{tv-bound}
\end{align}
where $\|\cdot\|_{\mbox{\tiny{TV}}}$ denotes the total variation distance of
two probability measures, and we have used the fact 
\begin{align}
  \sum_{i=1}^m \widetilde{f}_k(i) w(i) = \sum_{x \in X} f_k(x)
  \pi(x) = 1\,.
\end{align}
Taking the limit $k\rightarrow +\infty$, inequality (\ref{tv-bound}) indicates that 
$\bar{f}$ is constant on each subset $X_i$, i.e. 
$\bar{f}(x) = \bar{g}(i)$ if $x \in X_i$, where $\bar{g} : \bar{X} \rightarrow \mathbb{R}^+$. 
We argue that $\bar{f}$ is both positive and non-constant (this argument is adapted from \cite{mlsi-bobkov}). Suppose $\bar{f}$
is constant. Notice that 
\begin{align}
  \mathbf{E}_\pi \bar{f} = \mathbf{E}_w \bar{g} = \lim_{k \rightarrow +
  \infty} \mathbf{E}_\pi f_k = 1\,,
\end{align}
therefore we must have $\bar{f} \equiv 1$. Let $f_k= 1 + f'_k$, where
$\mathbf{E}_\pi(f'_k) = 0$ and  
$\lim\limits_{k\rightarrow +\infty} f'_k(x) = 0$, $\forall x \in X$.
Using Taylor expansion, we can verify 
\begin{align*}
  &\mathcal{E}^{(k)}(f_k, \ln f_k) = \Big(1 + \mathcal{O}(|f'_k|_{\infty})\Big)
  \mathcal{E}^{(k)}(f'_k, f'_k)\,, \\
  &\mbox{Ent}_{\pi} f_k = \frac{1}{2} \mbox{Var}_\pi f'_k +
  \mathcal{O}(|f'_k|^3_{\infty})\,.
\end{align*}
Because $\mbox{Var}_\pi f'_k \ge |f'_k|^2_{\infty} \pi_{\min}$, we know 
$\mbox{Ent}_{\pi} f_k = \frac{1 + o(1)}{2} \mbox{Var}_\pi f'_k$. Applying 
Poincar\'e inequality and Theorem~\ref{thm-1} (which implies $\lambda^{(k)}$ converges to
$\bar{\lambda}$), we can estimate
\begin{align}
  \bar{\gamma} > \lim_{k\rightarrow +\infty} \gamma^{(k)} \ge \liminf_{k \rightarrow
  \infty} 
  \frac{\mathcal{E}^{(k)}(f'_k, f'_k)}{\frac{1}{2}\mbox{Var}_{\pi}(f'_k)}
  \ge 2 \lim_{k\rightarrow +\infty} \lambda^{(k)} = 2\bar{\lambda}\,,
\end{align}
which contradicts to the relation (\ref{order-diff-const-bar-reversible}). Therefore function $\bar{f}$
is non-constant. Now we consider the positiveness. Define two disjoint sets 
\begin{align}
  M = \Big\{x \in X ~|~ \bar{f}(x) = 0\Big\}\,, \quad M' = \Big\{x \in X ~|~
  \bar{f}(x) > 0\Big\}\,.
\end{align}
Since $\mathbf{E}_\pi \bar{f} = 1$, we know $M'$ is not empty. Now assume set $M$ is also nonempty. 
Applying the irreducibility of Markov chain $\mathcal{C}$ to subset $M$ and $M'$, we
conclude that $\exists x \in M$, $y \in M'$  s.t. $Q(x,y) > 0$. 
Since $\bar{f}$ is constant on each subset $X_i$, we know $x \in X_i, y \in
X_j$, for some $i \neq j$ and $Q_1(x,y) > 0$. Then we have 
\begin{align*}
  +\infty =& \lim_{k\rightarrow +\infty} (f_k(y) - f_k(x)) (\ln f_k(y) - \ln
  f_k(x)) Q_1(x,y) \pi(x)  \\
  \le& \limsup_{k\rightarrow +\infty} \mathcal{E}^{(k)}(f_k, \ln f_k)  
  = \limsup_{k\rightarrow +\infty}\gamma^{(k)} \mbox{Ent}_\pi(f_k) \le
  \bar{\gamma}  \ln \frac{1}{\pi_{\min}}\,.
\end{align*}
This contradiction shows that $M$ is empty and therefore $\bar{f}$ is
positive. Now we can take the limits 
\begin{align*}
  &  \liminf_{k \rightarrow +\infty} \mathcal{E}^{(k)}(f_k, \ln f_k) \ge
 \bar{\mathcal{E}}(\bar{g}, \ln\bar{g})\,
 \\
 &
\lim_{k \rightarrow +\infty} \mbox{Ent}_\pi(f_k) =
\mbox{Ent}_\pi(\bar{f}) = \mbox{Ent}_w(\bar{g})\,, 
\end{align*}
and 
\begin{align}
  \bar{\gamma} > \lim_{k\rightarrow + \infty} \gamma^{(k)} = \lim_{k\rightarrow +
  \infty}
  \frac{\mathcal{E}^{(k)}(f_k, \ln f_k)}{\mbox{Ent}_\pi(f_k)}  \ge
  \frac{\bar{\mathcal{E}}(\bar{g}, \ln \bar{g})}{\mbox{Ent}_w(\bar{g})} \,.
\end{align}
But the above inequality is in contradiction with the fact that
$\bar{\gamma}$ is the modified logarithmic Sobolev constant of the reduced Markov chain
$\bar{C}$.  Therefore we conclude $\lim\limits_{\epsilon \rightarrow
0} \gamma_\epsilon = \bar{\gamma}$.
\end{proof}
\begin{remark}
  Theorem~\ref{thm-1} and Theorem~\ref{thm-2} imply that 
  both the Poincar\'e constant $\lambda_\epsilon$ and logarithmic Sobolev constant
  $\alpha_\epsilon$ of Markov chain
  $\mathcal{C}$ converge to their counterparts $\bar{\lambda}$, $\bar{\alpha}$ of the reduced Markov chain
  $\bar{\mathcal{C}}$ and the convergence order is
  $\mathcal{O}(\epsilon^{\frac{1}{2}})$.

  On the other hand, the result of Theorem~\ref{thm-3} is weaker in that we
  obtain convergence without convergence order.
\end{remark}

\section{Asymptotic analysis : general case}
\label{sec-general}
In this section we consider the general case without assuming reversibility. 
A convergence result of Kolmogorov backward equation can be found in
\cite{pavliotis2008multiscale} and will not be discussed here (also see
Appendix~\ref{app-sec-2}).
Unlike the reversible case in Section~\ref{sec-reversible}, 
relation (\ref{pi-w}) does not hold in general and the invariant measure $\pi^\epsilon$ will
depend on parameter $\epsilon$. Instead, we have the following result (recall the notations in
Subsection~\ref{subsec-notation}). 
\begin{theorem}
  Let $\pi^\epsilon$, $w$, $\pi_i$ be the invariant
  measures of Markov chain
  $\mathcal{C}$, $\bar{\mathcal{C}}$, and $\mathcal{C}_i$, respectively.
  We have 
\begin{align*}
   \Big(\sum_{i=1}^m\sum_{x \in X_i} |\pi^\epsilon(x) - w(i)\pi_i(x)|^2\Big)^{\frac{1}{2}} \
  \le 
 3^{\frac{1}{2}}\Big[
\frac{1}{2} + \Big(\frac{1}{2} + m\Big) \bar{\sigma}^{-2} d\,\Gamma
+ n \Big]^{\frac{1}{2}} \sigma_{\min}^{-1}Q_\infty \epsilon\,.
\end{align*}
\label{thm-pi-general}
\end{theorem}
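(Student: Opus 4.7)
\emph{Proof plan.} My plan is to use the stationarity equation $Q^T\pi^\epsilon = 0$, rewritten as $Q_0^T\pi^\epsilon = -\epsilon Q_1^T\pi^\epsilon$, to extract an $\mathcal{O}(\epsilon)$ estimate on $\pi^\epsilon - w\pi$. Define the block masses $w^\epsilon(i) := \sum_{x \in X_i}\pi^\epsilon(x)$ and the zero-block-sum residual $\eta(x) := \pi^\epsilon(x) - w^\epsilon(i)\pi_i(x)$ for $x \in X_i$. On each block I would further split $\eta_i = \eta_i^* + c_i\pi_i$ with $\eta_i^* \perp \pi_i$ in the standard inner product and $c_i$ fixed by the constraint $\sum_{x\in X_i}\eta_i(x) = 0$. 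This yields the three-term decomposition
\begin{align*}
\pi^\epsilon(x) - w(i)\pi_i(x) = (w^\epsilon(i) - w(i))\pi_i(x) + \eta_i^*(x) + c_i\pi_i(x),\quad x \in X_i,
\end{align*}
and the elementary inequality $(a+b+c)^2 \le 3(a^2+b^2+c^2)$ produces the prefactor $3^{1/2}$ while splitting the bracket into three additive contributions.

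The heart of the argument is estimating $\eta_i^*$. Since $Q_0^T(w^\epsilon \pi) = 0$ by the block-diagonal structure and invariance of each $\pi_i$, stationarity reduces on block $X_i$ to $Q_{0,i}^T\eta_i = b_i$ with $b_i := -\epsilon(Q_1^T\pi^\epsilon)|_{X_i}$. Consistency $\sum_{x\in X_i}b_i(x) = 0$ follows from $Q^T\pi^\epsilon = 0$ combined with zero row sums of $Q_{0,i}$. The smallest nonzero singular value gives $|\eta_i^*|_2 \le \sigma_i^{-1}|b_i|_2$; a Cauchy--Schwarz estimate with weights $\pi^\epsilon$, together with the elementary column bound $\sum_x Q_1(y,x)^2 \le Q_\infty^2/2$ (obtained by splitting diagonal and off-diagonal contributions), yields $|Q_1^T\pi^\epsilon|_2^2 \le Q_\infty^2/2$. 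This produces the $1/2$ term. The correction $c_i = -\sum_{x \in X_i}\eta_i^*(x)$ satisfies $c_i^2 \le n_i|\eta_i^*|_2^2$ by Cauchy--Schwarz against $\mathbf{1}$, which summed over $i$ produces the $n$ term.

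For the mass error $(w^\epsilon - w)\pi$, I would aggregate the identity $Q_1^T\pi^\epsilon = -\epsilon^{-1}Q_0^T\eta$ over each block $X_j$. Using the definition (\ref{q-bar}) of $\bar{Q}$ together with the identity $\sum_{x \in X_j}Q_1(x,x)\pi_j(x) = -\sum_{k\ne j}\bar{Q}(j,k)$ to handle the within-block diagonal contributions, this reduces to the aggregated equation
\begin{align*}
\bar{Q}^T(w^\epsilon - w) = -B,\qquad B(j) := \sum_{y \in X_j}(Q_1^T\eta)(y).
\end{align*}
Row-sum-zero of $Q_1$ gives $\sum_j B(j) = 0$, the consistency needed to invert $\bar{Q}^T$. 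The smallest nonzero singular value $\bar\sigma$, combined with a correction analogous to the $c_i$ step (forced because $w^\epsilon - w$ naturally lies in $\text{span}(\mathbf{1})^\perp$ rather than $\text{span}(w)^\perp$, which is where the singular-value lower bound applies), leads to $|w^\epsilon - w|_2^2 \le (1/2 + m)\bar\sigma^{-2}|B|_2^2$. Finally, $|B|_2^2 \le d\Gamma|\eta|_2^2$ follows by viewing $B$ as $D^T\eta$ for the $n\times m$ matrix $D(x,j) := \sum_{y\in X_j}Q_1(x,y)$ and applying Cauchy--Schwarz on the inner sums with the max out-degree $d$, with the Frobenius weight collapsing to $\Gamma$. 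Chaining with the $\eta$ bound produces the $(1/2 + m)\bar\sigma^{-2}d\Gamma$ contribution.

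The main obstacle I anticipate is the double bookkeeping around orthogonality. At both the fine and the reduced scales, the singular value bounds $\sigma_i$ and $\bar\sigma$ only control inversion on orthogonal complements of the respective kernels ($\text{span}(\pi_i)^\perp$ and $\text{span}(w)^\perp$), whereas the zero-sum constraints that arise naturally place the relevant vectors in $\text{span}(\mathbf{1})^\perp$, forcing corrections along kernel directions. Translating between these subspaces is exactly what produces the combinatorial factors $n$ and $m$ in the bracket; keeping the three error components cleanly separated while making these translations, and handling the diagonal terms in the aggregation identity for $\bar{Q}^T(w^\epsilon - w)$ correctly, is where the delicate constant-tracking lives.
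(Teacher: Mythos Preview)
Your two–level singular–value strategy is exactly the paper's, but your three–term split differs from the one actually used, and this difference matters for the constants. You take $w^\epsilon(i)=\sum_{x\in X_i}\pi^\epsilon(x)$ (block masses), which forces the auxiliary split $\eta_i=\eta_i^*+c_i\pi_i$. The paper instead lets $w^\epsilon(i)$ be the coefficient produced by Lemma~\ref{lem-1}, i.e.\ it writes $\pi^\epsilon(i,\cdot)=w^\epsilon(i)\pi_i+r^\epsilon(i,\cdot)$ with $r^\epsilon(i,\cdot)\perp\pi_i$ from the outset (your $\eta_i^*$), and then applies Lemma~\ref{lem-1} a second time at the coarse level to split $w^\epsilon-w=(w^\epsilon-\lambda w)+(\lambda-1)w$ with $(w^\epsilon-\lambda w)\perp w$. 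The three terms in the paper are $r^\epsilon$, $(w^\epsilon-\lambda w)\pi$, $(\lambda-1)w\pi$, not your $(w^\epsilon-w)\pi$, $\eta^*$, $c\pi$.

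The payoff of the paper's choice is that the coarse right–hand side $\bar b^\epsilon$ (your $-B$) is built from $r^\epsilon=\eta^*$ alone, so the bound $|\bar b^\epsilon|_2\le (d\Gamma/2)^{1/2}\sigma_{\min}^{-1}Q_\infty\epsilon$ follows directly from the fine–scale estimate. In your setup $B=D^T\eta$ involves $\eta$, not $\eta^*$, so your chain $|\eta^*|\to|c|\to|\eta|\to|B|\to|w^\epsilon-w|$ inserts an extra factor and does not land on the bracket term $(1/2+m)\bar\sigma^{-2}d\Gamma$: the step $|w^\epsilon-w|_2^2\le(1/2+m)\bar\sigma^{-2}|B|_2^2$ you claim actually gives $(1+m)$ with your orthogonality correction, and $|B|_2^2\le d\Gamma|\eta|_2^2$ then picks up the $c$–contribution on top. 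In the paper the $1/2$, the $(1/2+m)\bar\sigma^{-2}d\Gamma$, and the $n$ arise from $|r^\epsilon|_2^2$, from $|w^\epsilon-\lambda w|_2^2$, and from $|\lambda-1|^2$ (the latter combining an $m^{1/2}$ factor with $|\sum r^\epsilon|\le(n/2)^{1/2}\sigma_{\min}^{-1}Q_\infty\epsilon$ via $(a+b)^2\le 2a^2+2b^2$). Your plan proves an $\mathcal{O}(\epsilon)$ bound of the same shape, but to hit the stated constants you should adopt the paper's definition of $w^\epsilon$; this is precisely the device that dissolves the ``double bookkeeping around orthogonality'' you flagged as the main obstacle.
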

\begin{proof}
We first
study the
invariant measure $\pi^\epsilon$, which satisfies equation
\begin{align}
  (\frac{Q_0^T}{\epsilon} + Q_1^T) \pi^\epsilon = 0 \,,
  \label{multi-invariant-eqn}
\end{align}
i.e. $Q_0^T \pi^\epsilon = - \epsilon Q_1^T \pi^\epsilon$. Since matrix $Q_0$ is a block
diagonal matrix
given in (\ref{q0-block}),  we obtain linear systems  
\begin{align}
  Q_{0,i}^T\pi^\epsilon(i, \cdot) = - \epsilon b(i, \cdot)\,, \quad 1\le i \le m\,,
  \label{multi-invariant-eqn-1}
\end{align}
where $b(i,x) = \sum\limits_{y \in X} Q_1(y,x)\pi^\epsilon(y)$, $x\in X_i$. 
Summing up $x \in X_i$ in (\ref{multi-invariant-eqn-1}) and
noticing that each matrix $Q_{0,i}$ have zero row sums, 
we obtain 
\begin{align}
\sum\limits_{x \in X_i} b(i,x) = \sum_{x \in X_i}\sum_{y \in X}
Q_1(y,x)\pi^\epsilon(y) = 0\,,\quad 1 \le i \le m\,. 
\label{sum-zero-general}
\end{align}
Using (\ref{Q-inf}), we can estimate 
\begin{align*}
   |b(i,\cdot)|_2   
  =& \Big[\sum_{x \in X_i}\Big(\sum_{y \in X} Q_1(y,x)
\pi^\epsilon(y)\Big)^2\Big]^{\frac{1}{2}} 
\le \Big[\sum_{x \in X_i}\sum_{y \in X} Q^2_1(y,x)
\pi^\epsilon(y)\Big]^{\frac{1}{2}} \\
\le & \Big[\max_{x,y \in X} |Q_1(x,y)| \sum_{x \in X_i}\sum_{y \in X }
|Q_1(y,x)|\pi^\epsilon(y) \Big]^{\frac{1}{2}} \\
\le& \Big[\frac{1}{2} Q_\infty\sum\limits_{x \in X_i}\sum\limits_{y \in X }
|Q_1(y,x)|\pi^\epsilon(y)
\Big]^{\frac{1}{2}} =: g(i) \,,
\end{align*}
which implies
\begin{align}
  \sum\limits_{i=1}^m |b(i)|_2^2\le \sum_{i=1}^m g(i)^2 \le
  \frac{1}{2}Q_{\infty}^2 \,.
  \label{b-l2-2}
\end{align}
Applying Lemma~\ref{lem-1} to equation (\ref{multi-invariant-eqn-1}), we have
\begin{align}
  |\pi^\epsilon(i,\cdot) - w^\epsilon(i)\pi_i|_\infty \le
  |\pi^\epsilon(i,\cdot) - w^\epsilon(i)\pi_i|_2 \le
  \epsilon\sigma_{i}^{-1} g(i)
  \label{general-1}
\end{align}
for some $w^\epsilon(i) \in \mathbb{R}$. Recall that $\sigma_{i}$ is the smallest nonzero
singular value of matrix $Q_{0,i}$. 
Let $\pi^\epsilon(i,\cdot) = w^\epsilon(i)\pi_i(\cdot) + r^\epsilon(i,\cdot)$, $1 \le i \le m$, using
(\ref{sum-zero-general}), we have 
\begin{align}
  \begin{split}
   \sum_{j=1}^m \bar{Q}(j,i) w^\epsilon(j) 
  = &
  \sum_{x \in X_i} \sum_{j=1}^m \sum_{y \in X_j} Q_1(y,x) \pi_i(y)w^\epsilon(j)\\
  =&
  - \sum_{x \in X_i} \sum_{j=1}^m \sum_{y \in X_j} Q_1(y,x)r^\epsilon(j,y) =:
  \bar{b}^\epsilon(i)\,.
\end{split}
  \label{q-bar-w}
\end{align}
We also have 
\begin{align}
  |\bar{b}^\epsilon|_2 &= \Big[\sum_{i=1}^m\Big|\sum_{x \in X_i}\sum_{j=1}^m\sum_{y \in X_j}
Q_1(y,x)r^\epsilon(j,y)\Big|^2\Big]^{\frac{1}{2}} \notag \\
&\le 
\Big[\sum_{i=1}^m\Big(\sum_{x \in X_i} \sum_{y \in X}
Q_1(y,x)^2\Big)\Big( \sum_{j=1}^m\sum_{y \in X_j} \sum_{x \in X_i, y\sim x}
r^\epsilon(j,y)^2\Big)\Big]^{\frac{1}{2}} \,,
\label{b-bar-2}
\end{align}
where $y\sim x$ means $Q_1(y,x) \neq 0$.
From (\ref{b-l2-2}) and (\ref{general-1}), 
\begin{align*}
  & \Big(\sum_{j=1}^m\sum_{y \in X_j} \sum_{x \in X_i,y \sim x}
  r^\epsilon(j,y)^2\Big)^{\frac{1}{2}}
  \le \Big(d \sum_{j=1}^m |r^\epsilon(j,\cdot)|_2^2\Big)^{\frac{1}{2}} \le
  \Big(\frac{d}{2}\Big)^{\frac{1}{2}} 
  \sigma_{\min}^{-1} Q_\infty \epsilon\,,
\end{align*}
therefore (\ref{b-bar-2}) becomes 
\begin{align*}
|\bar{b}^\epsilon|_2 \le \epsilon \sigma_{\min}^{-1} Q_{\infty} \Big(\frac{d\,\Gamma}{2}\Big)^{\frac{1}{2}} \,,
\end{align*}
where $\Gamma = \mbox{tr} (Q_1Q_1^T)$.
Now applying Lemma~\ref{lem-1} to equation (\ref{q-bar-w}), we obtain
\begin{align}
  |w^\epsilon - \lambda w|_\infty \le |w^\epsilon - \lambda w|_2 \le
(\bar{\sigma}\sigma_{\min})^{-1} 
Q_\infty \Big(\frac{d\,\Gamma}{2}\Big)^{\frac{1}{2}} 
\epsilon\,,
\label{general-2}
\end{align}
where $\lambda \in \mathbb{R}$, $w$ is the invariant measure
of the reduced Markov chain $\bar{\mathcal{C}}$, i.e. $\bar{Q}^T w = 0$ and
$\bar{\sigma}$ is the smallest nonzero singular value of
$\bar{Q}$. Therefore 
\begin{align}
  & \Big|\sum_{i=1}^m \big(w^\epsilon(i) - \lambda w(i)\big)\Big| \le
m^{\frac{1}{2}} |w^\epsilon-\lambda w|_2 \le 
(\bar{\sigma}\sigma_{\min})^{-1} Q_\infty \Big(\frac{m\,d\,\Gamma}{2}\Big)^{\frac{1}{2}} 
\epsilon\,. 
\label{diff-w-lambda-wbar}
\end{align}
From (\ref{b-l2-2}), (\ref{general-1}) and 
\begin{align*}
  \sum_{i=1}^m w^\epsilon(i) = 1 - \sum_{i=1}^m\sum_{x\in X_i} r^\epsilon(i,x)\,,\quad \sum_{i=1}^m w(i) =
  1\,,
\end{align*}
we can estimate
\begin{align*}
  & \Big|\sum_{i=1}^m \big(w^\epsilon(i) - \lambda w(i)\big)\Big| = \Big|1 - \lambda -
  \sum_{i=1}^m\sum_{x \in X_i} r^\epsilon(i,x)\Big| \ge |1 - \lambda| -
  \Big|\sum_{i=1}^m\sum_{x \in X_i} r^\epsilon(i,x)\Big|\,, \\
& \Big|\sum_{i=1}^m\sum_{x\in X_i} r^\epsilon(i,x)\Big| \le \sum_{i=1}^m \big|\sum_{x\in
X_i} r^\epsilon(i,x)\big| \le 
\sum_{i=1}^m n_{i}^{\frac{1}{2}} \sigma_{i}^{-1} g(i)\epsilon
\le  \Big(\frac{n}{2}\Big)^{\frac{1}{2}} \sigma_{\min}^{-1} Q_\infty\, \epsilon\,.
\end{align*}
Together with (\ref{diff-w-lambda-wbar}), we know 
\begin{align}
  |1 - \lambda| \le \Big[\bar{\sigma}^{-1} 
    \Big(\frac{m\,d\,\Gamma}{2}\Big)^{\frac{1}{2}} + 
  \Big(\frac{n}{2}\Big)^\frac{1}{2} \Big] \sigma_{\min}^{-1}Q_\infty \epsilon\,.
\label{general-lambda}
\end{align}
Combining (\ref{general-1}), (\ref{general-2}) and (\ref{general-lambda}), we have 
\begin{comment}
\begin{align*}
  & |\pi^\epsilon(i,\cdot) - w(i)\pi_i|_2 \\
   \le &|\pi^\epsilon(i,\cdot) - w^\epsilon(i) \pi_i|_2 + |w^\epsilon(i) - \lambda
  w(i)| + w(i)|\lambda-1| \\
  \le & \Big[g(i) + \frac{1}{2}(m^{\frac{1}{2}} + 1) \bar{\sigma}^{-1} 
  \big(d\,\Gamma\big)^{\frac{1}{2}}
+ \frac{n^\frac{1}{2}}{2} \Big] \sigma_{\min}^{-1}Q_\infty \epsilon\,.
\end{align*}
Also 
\end{comment}
\begin{align*}
  & \Big(\sum_{i=1}^m\sum_{x \in X_i} |\pi^\epsilon(x) - w(i)\pi_i(x)|^2\Big)^{\frac{1}{2}} \\
  \le &3^{\frac{1}{2}}\Big[\sum_{i=1}^m |\pi^\epsilon(i,\cdot) - w^\epsilon(i) \pi_i|^2_2 +
  |w^\epsilon - \lambda w|^2_2 +|\lambda-1|^2 \sum_{i=1}^m w(i)^2\Big]^{\frac{1}{2}} \\
\le & 3^{\frac{1}{2}}\Big\{\frac{1}{2} + \frac{\bar{\sigma}^{-2} d\,\Gamma}{2}
+ \Big[\bar{\sigma}^{-1} 
  \Big(\frac{m\,d\,\Gamma}{2}\Big)^{\frac{1}{2}} + 
\Big(\frac{n}{2}\Big)^\frac{1}{2} \Big]^2
\Big\}^{\frac{1}{2}} \sigma_{\min}^{-1}Q_\infty
\epsilon\,\\
  \le & 3^{\frac{1}{2}}\Big[
\frac{1}{2} + \Big(\frac{1}{2} + m\Big) \bar{\sigma}^{-2} d\,\Gamma
+ n \Big]^{\frac{1}{2}} \sigma_{\min}^{-1}Q_\infty \epsilon\,.
\end{align*}
\end{proof}

Based on Theorem~\ref{thm-pi-general}, we can obtain convergence results 
of various constants of Markov chain $\mathcal{C}$. In the proof of the following
result, we will use the fact that the infima in definitions
(\ref{poincare-const}), (\ref{log-sobolev-const}) and (\ref{mlsi-def}) can be attained by some
functions. This fact can be verified using arguments in \cite{mlsi-bobkov},
which is also valid in non-reversible case.
\begin{theorem}
  Let $\lambda_\epsilon$, $\alpha_\epsilon$, $\gamma_\epsilon$ be the
  Poincar\'e
  constant, logarithmic Sobolev constant and modified logarithmic Sobolev constant of Markov
  chain $\mathcal{C}$. Also let $\bar{\lambda}$, $\bar{\alpha}$, $\bar{\gamma}$ be
  their counterparts of Markov chain $\bar{\mathcal{C}}$. We have 
  \begin{align}
    \lim_{\epsilon \rightarrow 0} \lambda_\epsilon = \bar{\lambda}\,, \quad 
    \lim_{\epsilon \rightarrow 0} \alpha_\epsilon = \bar{\alpha}\,, \quad 
    \lim_{\epsilon \rightarrow 0} \gamma_\epsilon = \bar{\gamma}\,. 
  \end{align}
  \label{thm-constants-general}
\end{theorem}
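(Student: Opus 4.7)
The plan is to argue by contradiction for each of the three constants, adapting the compactness scheme already used in the proof of Theorem~\ref{thm-3}, with Theorem~\ref{thm-pi-general} playing the role that the exact identity $\pi(x)=w(i)\pi_i(x)$ played in the reversible setting. The key fact that survives loss of reversibility is that the representation (\ref{dirichlet-form-f-f-multiscale}) for $\mathcal{E}_\epsilon(f,f)$ remains valid in general.

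For the upper bounds, I would plug test functions of the form $f(x)=g(i)$ for $x\in X_i$ into the variational definitions (\ref{poincare-const}), (\ref{log-sobolev-const}) and (\ref{mlsi-def}). For such an $f$ the intra-cluster ($1/\epsilon$) contribution in (\ref{dirichlet-form-f-f-multiscale}) vanishes, while the inter-cluster sum involves $\pi^\epsilon$. Theorem~\ref{thm-pi-general} gives $\pi^\epsilon(x)\to w(i)\pi_i(x)$ as $\epsilon\to 0$, hence the numerator converges to $\bar{\mathcal{E}}(g,g)$ and the denominators $\mbox{Var}_{\pi^\epsilon}f$, $\mbox{Ent}_{\pi^\epsilon}(f^2)$, $\mbox{Ent}_{\pi^\epsilon}(f)$ converge to the corresponding bar-quantities by continuity in the measure at a fixed $f$. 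This yields $\limsup_{\epsilon\to 0}\lambda_\epsilon\le\bar\lambda$, $\limsup_{\epsilon\to 0}\alpha_\epsilon\le\bar\alpha$ and $\limsup_{\epsilon\to 0}\gamma_\epsilon\le\bar\gamma$ in one stroke.

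For the lower bounds, suppose along some sequence $\epsilon^{(k)}\to 0$ the constant in question converges to a value strictly smaller than its bar-counterpart. Let $f_k$ be a minimizer (whose existence carries over from the reversible case by the arguments of \cite{mlsi-bobkov}) with a normalization that fixes the denominator, e.g.\ $\mathbf{E}_{\pi^{\epsilon^{(k)}}}f_k=0$ and $\mbox{Var}_{\pi^{\epsilon^{(k)}}}f_k=1$ in the Poincar\'e case, and analogous choices for $\alpha_\epsilon,\gamma_\epsilon$. Since the Rayleigh quotient is bounded above, the $1/\epsilon^{(k)}$ piece of (\ref{dirichlet-form-f-f-multiscale}) is $O(1)$; multiplying through by $\epsilon^{(k)}$ forces the intra-cluster oscillation of $f_k$ in $L^2(\pi^{\epsilon^{(k)}})$ to tend to zero. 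Extracting a subsequence, $f_k\to\bar f$ pointwise, with $\bar f(x)=\bar g(i)$ for $x\in X_i$. Passing to the limit in the inter-cluster part of (\ref{dirichlet-form-f-f-multiscale}) via Theorem~\ref{thm-pi-general}, and in the denominator by continuity, shows that the limiting Rayleigh quotient evaluated at $\bar g$ is no smaller than $\bar\lambda$ (respectively $\bar\alpha$, $\bar\gamma$), producing the contradiction.

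The main obstacle is verifying that the limit $\bar g$ is \emph{admissible} in the variational problem on $\bar X$: non-constant, and in the modified logarithmic Sobolev case also strictly positive. Non-constancy is guaranteed by the normalization. Strict positivity in the $\gamma_\epsilon$-argument is the most delicate point and would be handled exactly as at the end of the proof of Theorem~\ref{thm-3}: if $\bar f$ vanished on some $x\in X_i$ while being positive on some $y\in X_j$, then by irreducibility of $\mathcal{C}$ one can select such a pair with $Q_1(x,y)>0$, and the corresponding term in $\mathcal{E}^{(k)}(f_k,\ln f_k)$ would blow up, contradicting the Rayleigh-quotient bound. Ruling out $\bar f\equiv\mathrm{const}$ in the $\gamma_\epsilon$-case can be imported verbatim from Theorem~\ref{thm-3}, combined with the Poincar\'e convergence $\lambda_\epsilon\to\bar\lambda$ already established earlier in the same proof. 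The analogous checks for $\lambda_\epsilon$ and $\alpha_\epsilon$ are simpler, being forced directly by the chosen normalization.
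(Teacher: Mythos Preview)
Your proposal is correct and follows essentially the same compactness-and-contradiction scheme as the paper: upper bounds by plugging in cluster-constant test functions and invoking Theorem~\ref{thm-pi-general}, lower bounds by extracting a subsequence of minimizers, showing the limit is cluster-constant, and deriving a contradiction with the variational definition on $\bar X$. The treatment of admissibility (positivity and non-constancy of $\bar g$) in the $\gamma_\epsilon$ case also matches the paper.

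There is one technical point you gloss over that the paper handles explicitly. For $\gamma_\epsilon$ the numerator of the Rayleigh quotient is $\mathcal{E}_\epsilon(f_k,\ln f_k)$, not $\mathcal{E}_\epsilon(f_k,f_k)$, so the representation (\ref{dirichlet-form-f-f-multiscale}) is not directly applicable; in particular, since in the non-reversible case the symmetric formula (\ref{dirichlet-form-f-g-multiscale}) fails, you cannot simply read off a non-negative $1/\epsilon^{(k)}$ intra-cluster piece from $\mathcal{E}_\epsilon(f_k,\ln f_k)$. The paper bridges this gap by invoking Lemma~2.7 of \cite{log-sob-diaconis}, which gives $\mathcal{E}^{(k)}(f_k^{1/2},f_k^{1/2})\le\tfrac{1}{2}\mathcal{E}^{(k)}(f_k,\ln f_k)$ in the non-reversible setting as well, and then applies (\ref{dirichlet-form-f-f-multiscale}) to $f_k^{1/2}$ to force the intra-cluster oscillation to vanish. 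Once you insert this step, your argument coincides with the paper's.
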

\begin{proof}
  We will sketch the proof, since the argument is similar to
  Theorem~\ref{thm-3}.
  \begin{enumerate}
    \item
      First consider the Poincar\'e constant.
      Let function $g : \bar{X} \rightarrow \mathbb{R}$ satisfy $\mathbf{E}_w
      g = 0$ and $\mbox{Var}_w g = 1$. Define $f(x) =
      g(i)$ for $x \in X_i$. We know $|f|_\infty=|g|_\infty$ is bounded.
      Applying Theorem~\ref{thm-pi-general} and using (\ref{dirichlet-form-f-f-multiscale}), we know 
      \begin{align}
	&\mathcal{E}_\epsilon(f, f) = \frac{1}{2} \sum_{i \neq j}\sum_{x \in X_i}\sum_{y \in X_j} (f(y) - f(x))^2 Q_1(x,y) \pi^\epsilon(x) 
	=\bar{\mathcal{E}}(g,g) + \mathcal{O}(\epsilon)\,,\\
	&\mbox{Var}_{\pi^\epsilon} f = \sum_{i=1}^m g^2(i) \Big(\sum_{x \in X_i}
	\pi^\epsilon(x)\Big)  - \Big(\sum_{i=1}^m g(i) \sum_{x \in X_i}
	\pi^\epsilon(x)\Big)^2 = \mbox{Var}_w g + \mathcal{O}(\epsilon)\,.
      \end{align}
      Therefore 
      $$\limsup_{\epsilon \rightarrow 0} \lambda_\epsilon \le
      \limsup_{\epsilon \rightarrow 0} 
      \frac{\mathcal{E}_\epsilon(f, f)}{\mbox{Var}_{\pi^\epsilon} f} = 
      \frac{\bar{\mathcal{E}}(g, g)}{\mbox{Var}_{w} g}, $$
      and we obtain $\limsup\limits_{\epsilon \rightarrow 0} \lambda_\epsilon \le
      \bar{\lambda}$ after taking infimum among functions $g : \bar{X}
      \rightarrow \mathbb{R}$. 

      Now suppose the conclusion is not true, then we can find a sequence
      $\epsilon^{(k)}$, $\lim\limits_{k \rightarrow + \infty} \epsilon^{(k)} = 0$ and
      $\lim\limits_{k \rightarrow +\infty} \lambda^{(k)} < \bar{\lambda}$ (We
      use the same notations as in Theorem~\ref{thm-3}). Let
      function $f_k$ be the extreme functions in (\ref{poincare-const}) and satisfy $\mbox{Var}_{\pi^{(k)}} f_k = 1$ and
      $\mathbf{E}_{\pi^{(k)}} f_k = 0$. Then $\lambda^{(k)} =
      \mathcal{E}^{(k)}(f_k,f_k)$. It is easy to see 
      $$\limsup_{k\rightarrow
      +\infty} |f_k|_\infty < +\infty\,,$$ and therefore we can find a subsequence (also
      denoted as $f_k$ for simplicity) s.t. $f_k$ converges to $f : X
      \rightarrow \mathbb{R}$. Using $\lim\limits_{k\rightarrow +\infty}
      \mathcal{E}^{(k)}(f_k,f_k) \le \bar{\lambda}$, 
      we can deduce that $f(x) = g(i)$ if $x \in X_i$, for some $g :
      \bar{X} \rightarrow \mathbb{R}$. And $\mbox{Var}_w g = 1$, $\mathbf{E}_w
      g = 0$. Therefore 
      \begin{align}
	\bar{\lambda} \le \bar{\mathcal{E}}(g,g) \le \lim_{k \rightarrow +\infty}
	\mathcal{E}^{(k)} (f_k,f_k) = \lim_{k\rightarrow +\infty}
	\lambda^{(k)} < \bar{\lambda}\,.
      \end{align}
      This contradiction shows that $\lim\limits_{\epsilon \rightarrow 0} \lambda_\epsilon = \bar{\lambda}$\,.
    \item
      We continue to prove the convergence of the modified logarithmic Sobolev constant $\gamma_\epsilon$
      (the proof for the convergence of $\alpha_\epsilon$ is similar and is
      omitted). First of all, using a similar argument as above, we can obtain
      \begin{align}
	\limsup\limits_{\epsilon \rightarrow 0} \gamma_\epsilon \le
	\bar{\gamma}\,.
      \end{align}
      Suppose the conclusion is not true and then we can find sequence
      $\epsilon^{(k)}$, s.t. $\lim\limits_{k\rightarrow +\infty} \epsilon^{(k)} = 0$
      and $\lim\limits_{k \rightarrow +\infty} \gamma^{(k)}< \bar{\gamma}$.
      Let $f_k$ be the extreme functions in (\ref{mlsi-def}) with $\mathbf{E}_{\pi^{(k)}} f_k = 1$.
      We can argue as above that $\limsup\limits_{k \rightarrow +\infty}
      |f_k|_\infty < +\infty$, and there is a subsequence (also
      denoted as $f_k$) converging to some function $f$. Using
      (\ref{dirichlet-form-f-f-multiscale}) and Lemma~$2.7$ in \cite{log-sob-diaconis}, we have 
      \begin{align}
	&\frac{1}{2\epsilon^{(\epsilon)}} \sum_{i=1}^m \sum_{x,x' \in X_i}
	(f_k^{\frac{1}{2}}(x') - f_k^{\frac{1}{2}} (x))^2 Q_{0,i}(x,x') \pi^{(k)}(x) \notag \\
 \le & \mathcal{E}^{(k)}(f_k^{\frac{1}{2}}, f_k^{\frac{1}{2}}) \le \frac{1}{2} 
      \mathcal{E}^{(k)}(f_k, \ln f_k) =\frac{\lambda^{(k)}}{2}
      \mbox{Ent}_{\pi^{(k)}} (f_k)\,.
      \end{align}
      Taking limit $k \rightarrow + \infty$, applying
      Theorem~\ref{thm-pi-general} and the boundness of
      $\mbox{Ent}_{\pi^{(k)}}(f_k)$, we can deduce that $f$ is constant on each
      subset $X_i$, i.e. $f(x) = g(i)$ if $x \in X_i$, for some $g : \bar{X}
      \rightarrow \mathbb{R}^+$. We have $\mathbf{E}_w g =
      \lim\limits_{k\rightarrow +\infty} \mathbf{E}_{\pi^{(k)}} f_k = 1$. 
      The same argument as in Theorem~\ref{thm-3} shows that $g$ is positive. 
      Now we show $g$ is non-constant. Assume $g$ is constant and let $f_k =
      1 + f_k'$, then we have $\mathbf{E}_{\pi^{(k)}} f_k' = 0$ and $\lim\limits_{k\rightarrow +\infty} f_k'(x) = 0$, $\forall x \in X$. 
     Using Taylor expansion, we have 
     \begin{align*}
       \mathcal{E}^{(k)}(f_k, \ln f_k) 
       =&-\sum_{x \in X} f_k(x) \Big[\sum_{y \in X, y \neq x} Q(x,y) (\ln f_k(y) -
       \ln f_k(x))\Big] \pi^{(k)}(x) \\
       =& \big(1+\mathcal{O}(|f_k'|_\infty)\big) \mathcal{E}^{(k)} (f_k', f_k')
     \end{align*}
     and $\mbox{Ent}_{\pi^{(k)}}(f_k) = (\frac{1 + o(1)}{2})
     \mbox{Var}_{\pi^{(k)}} f_k'$.  We can deduce a contradiction as in
     Theorem~\ref{thm-3}. Therefore $g$ is non-constant, i.e. $\mbox{Ent}_w g
     > 0$. Taking the limit, we obtain
      \begin{align}
	\bar{\gamma} \le \frac{\bar{\mathcal{E}}(g, \ln g)}{\mbox{Ent}_w(g)}
	\le \lim_{k \rightarrow + \infty} 
	\frac{\mathcal{E}^{(k)}(f_k, \ln f_k)}{\mbox{Ent}_{\pi^{(k)}} (f_k)}
	= \lim_{k \rightarrow +\infty} \gamma^{(k)} < \bar{\gamma}\,.
      \end{align}
	This contradiction shows that $\lim\limits_{\epsilon \rightarrow 0}
	\gamma_\epsilon = \bar{\gamma}$\,.
  \end{enumerate}
\end{proof}

\section{Conclusion}
\label{sec-conclusion}
In this paper we consider continuous-time Markov chains on finite state space and
focus on the situation when systems' transitions within clusters are much
faster than transitions among clusters. Several asymptotic results are
obtained concerning Kolmogorov backward equation, Poincar\'e constant,
and (modified) logarithmic Sobolev constants. These results validate the
reduced Markov chain as an approximation of the multiscale Markov chain in the
asymptotic limit.
Especially, when understanding the multiscale Markov chain becomes
infeasible, either due to an extremely large state space or limited
information to identify all transition rates, our results will be
instructive as it suggests that the reduced Markov chain can be
a useful approximation of the original one.

On the other hand, while we assume that there are several subsets of the state space 
such that transitions between them are relatively slow, in applications
it might be the case that these subsets are not known a priori and need to be
identified. How to identify (clustering) the slow subsets is an important problem in
the studies of proteins \cite{Deuflhard200039, msm_gen_valid}, principal component analysis \cite{pca_jolliffe,
pca_review}, climates \cite{math_climate_majda} and
network \cite{Girvan11062002, detect-community-network} et al. Readers
are referred to those literatures for more details.

In future work, it might be interesting to consider asymptotic behaviors 
of other constants in \cite{ricci_entropy, curvature_yau}. As more and more real data become
available nowadays, it is also interesting to quantify the approximation error
of the reduced Markov chain using a data-based approach. 
\section*{Acknowledgement}
Part of this work was done when the author was visiting School of Mathematics
Sciences at Peking University and institute of natural sciences at Shanghai Jiao Tong University. 
The hospitality during the stay is gratefully acknowledged.

\appendix
\section{Some useful facts}
\label{app-sec-1}
In this section we collect some results related to continuous-time Markov chain.
Let $n > 1$, $Q$ be an $n \times n$ matrix satisfying $Q_{ij} \ge 0$ for $1
\le i\neq j \le n$ and $Q_{ii} = -\sum\limits_{j\neq i} Q_{ij}$, $1 \le i \le
n$. We will call such matrix as transition rate matrix. Clearly, $Q\bm{1} = 0$, where $\bm{1} = (1,1,\cdots,
1)^T \in \mathbb{R}^n$. Define $P(t) = e^{tQ}$, then 
$P(t)_{ij} \ge 0$ for $1 \le
i,j \le n$, $P(t)P(s) = P(t+s)$, for $t, s\ge
0$ and $P(t)\bm{1} = \bm{1}$. Therefore $P(t)$ are stochastic matrices and
satisfy semigroup property. Let $\Omega =\{1,2,\cdots, n\}$ and
$\mathcal{F} = \{f \,|\, f : \Omega \rightarrow \mathbb{R}\}$, which can
be viewed as $\mathbb{R}^n$. For $f \in \mathcal{F}$, denote
$|f|_{\infty} = \max\limits_{i \in \Omega} |f(i)|$. Then $P(t)$ defines a semigroup on
$\mathcal{F}$ with infinitesimal generator $Q$. It also defines a
continuous-time Markov chain $x(t)$ on $\Omega$ such that ${\bf{P}}(x(t) = j \,|\,
x(0) = i) = P(t)_{ij}$, $1\le i,j \le n$. 
Define $f_t = P(t) f \in \mathcal{F}$ for function $f \in \mathcal{F}$, we have 
\begin{align}
f_t(i) = \mathbf{E}\big(f(x(t))~|~x(0) = i\big)
\label{feynman-kac}
\end{align}
and it satisfies the Kolmogorov backward equation
\begin{align}
  \frac{d}{dt} f_t = Q f_t\,,\quad f_0 = f\,. 
  \label{kbe}
\end{align}
From (\ref{feynman-kac}), we know $|f_t|_{\infty} \le |f_0|_{\infty}$.

Assume the probability distribution of $x(t)$ at time $t \ge 0$ is $\mu_t$, then 
it is known that ${\mu}_t$ satisfies Kolmogorov forward (Fokker-Planck) equation 
\begin{align}
  \dot{\mu}_t = Q^T {\mu}_t\,
  \label{kfe}
\end{align}
with initial value $\mu_0$. Therefore we have ${\mu}_t = P(t)^T \mu_0$.
A probability measure ${\pi}$ is called the invariant measure of Markov chain
$x_t$ iff $Q^T \pi = 0$.
If we further assume the Markov chain is irreducible, then the invariant
measure is unique.
Also define the $\pi$-weighted inner product on
$\mathcal{F}$ as
\begin{align*}
  \langle f , g \rangle_{{\pi}} = \sum_{i \in \Omega} f(i) g(i) \pi(i)\,,
  \quad f, g \in \mathcal{F}\,,
\end{align*}
and the Dirichlet form $\mathcal{E}$ 
\begin{align*}
  \mathcal{E}(f,g) = -\langle f, Qg\rangle_{\pi}\,.
\end{align*}
Let $Q^*$ be the adjoint matrix under $\langle,\rangle_{\pi}$, we have $Q^*_{ij} = 
\frac{Q_{ji}\pi(j)}{\pi(i)}$. 
We can check that $Q^*$ is also a transition rate matrix and $(Q^*)^T\pi =
0$. The corresponding Markov chain defined by $Q^*$ is called the time-reversed Markov
chain. For $f \in \mathcal{F}$, we have 
\begin{align}
  \begin{split}
  \mathcal{E}(f,f) =& -\langle\frac{Q+Q^*}{2}f,f\rangle_{\pi} \\
		   =& \frac{1}{2}\sum_{i,j \in \Omega} \frac{Q_{ij} + Q^*_{ij}}{2} (f(i)
  - f(j))^2 \pi(i)  \\
     =& \frac{1}{2}\sum_{i,j \in \Omega} Q_{ij} (f(i) - f(j))^2 \pi(i) \ge
  0\,.
\end{split}
  \label{dirichlet-nonsym}
\end{align}

Define matrix
$$\Pi = \mbox{diag}\{\pi(1), \pi(2), \cdots, \pi(n)\},$$ then we have the matrix
equation $Q^* = \Pi^{-1}Q^T\Pi$.
It is direct to see that 
\begin{align*}
  & \quad \mbox{Dirichlet form}~\mathcal{E}~ \mbox{is symmetric } \\
  \Longleftrightarrow & \quad Q = Q^* \\
\Longleftrightarrow & \quad \pi(i)Q_{ij} = \pi(j)Q_{ji} \,, \forall i,j
\in \Omega \,.
\end{align*}
In this case, we say $\pi$ satisfies the 
detailed balance condition and the Markov chain is reversible. 

For $\mu_t$ satisfying (\ref{kfe}), we define $\mu_t = \rho_t \pi$, i.e.
$\mu_t(i) = \rho_t(i) \pi(i)$, $i \in \Omega$, then 
\begin{align}
  \frac{d}{dt}\rho_t = Q^* \rho_t\,.
  \label{adjoint-eqn}
\end{align}
When $Q^*=Q$, i.e. the detailed balance condition holds, equation
(\ref{adjoint-eqn}) coincides with the Kolmogorov backward equation
(\ref{kbe}).

Consider the singular value decomposition (SVD) $Q = UDV^T$, where $U, V$ are
$n \times n$ orthogonal matrix. $D = \mbox{diag}\{\sigma_1, \sigma_2, \cdots,
\sigma_n\}$ is a diagonal matrix consisting of singular values $\sigma_1 \ge \sigma_2 \ge \cdots \ge \sigma_n\ge 0$.
Denote $i$th column of matrix $U,V$ as $U_i$, $V_i$, respectively, i.e.  
$U=[U_1, U_2, \cdots, U_n], V=[V_1, V_2, \cdots, V_n]$. Then $\{U_i\}_{1 \le i
\le n}$ and $\{V_i\}_{1 \le i \le n}$ are two orthonormal basis of
$\mathbb{R}^n$. 
Since $Q\bm{1} = 0$, $Q^T\pi = 0$ and using the fact that the invariant measure $\pi$ is unique, we know $\sigma_n = 0 < \sigma_{n-1}$.
We can further deduce that $V_n \parallel \bm{1}$ and $U_n \parallel \pi$.
The linear system $Q^T x = b$ can be studied based on the SVD decomposition. We have 
\begin{lemma}
  Consider linear system $Q^T x=b$, where $x, b \in \mathbb{R}^n$.
  \begin{enumerate}
    \item
      There is a solution if and only if $b^T \bm{1} = 0$.
    \item
      Assume $b^T \bm{1} = 0$, then the solutions can be written as 
      \begin{align}
	x = a \pi + \sum_{i = 1}^{n-1} \sigma_i^{-1} (V_i^Tb) U_i\,,
	\label{qxb-sol}
      \end{align}
      for $\forall a \in \mathbb{R}$. Furthermore,
      \begin{align}
	|x - a\pi|_\infty \le |x - a\pi|_2 \le \sigma_{n-1}^{-1} |b|_2
	\,.
	\label{qxb-bound}
      \end{align}
%      and 
%      \begin{align}
%	\big|\sum_i x_i - a\big| \le \sigma_{n-1}^{-1}|b|_2 \Big(n-\frac{1}{\sum_i
%	\pi_i^2}\Big)^{\frac{1}{2}}
%      \end{align}
\end{enumerate}
\label{lem-1}
\end{lemma}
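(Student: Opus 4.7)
The plan is to reduce everything to the singular value decomposition $Q = UDV^T$ that has already been set up just before the lemma, where $\sigma_n = 0 < \sigma_{n-1}$, $V_n \parallel \mathbf{1}$ and $U_n \parallel \pi$. The orthonormal bases $\{U_i\}$ and $\{V_i\}$ will diagonalize $Q^T$ in the sense that $Q^T = V D U^T$, so the equation $Q^T x = b$ decouples into $n$ scalar equations $\sigma_i (U_i^T x) = V_i^T b$ after expanding $x$ in the basis $\{U_i\}$ and $b$ in the basis $\{V_i\}$.

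For part (1), I would read off the solvability condition from the $i = n$ equation: since $\sigma_n = 0$, the system is consistent if and only if $V_n^T b = 0$, and because $V_n \parallel \mathbf{1}$ this is exactly $b^T \mathbf{1} = 0$. (One can equivalently invoke the Fredholm alternative together with $\ker Q = \mathrm{span}(\mathbf{1})$, which follows from the irreducibility assumption on $\mathcal{C}$.)

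For part (2), assuming $b^T \mathbf{1} = 0$, the equations for $i \le n-1$ are uniquely solved by $U_i^T x = \sigma_i^{-1}(V_i^T b)$, while the coefficient along $U_n$ is free. Expanding $x$ in the basis $\{U_i\}$ gives
\begin{align*}
x = (U_n^T x) U_n + \sum_{i=1}^{n-1} \sigma_i^{-1}(V_i^T b) U_i,
\end{align*}
and since $U_n \parallel \pi$ the first term is exactly $a\pi$ for some $a \in \mathbb{R}$, yielding (\ref{qxb-sol}). The norm bound then falls out by orthonormality of $\{U_i\}$ and $\{V_i\}$:
\begin{align*}
|x - a\pi|_2^2 = \sum_{i=1}^{n-1} \sigma_i^{-2} (V_i^T b)^2 \le \sigma_{n-1}^{-2} \sum_{i=1}^{n-1} (V_i^T b)^2 \le \sigma_{n-1}^{-2} |b|_2^2,
\end{align*}
where the last step uses $\sum_{i=1}^{n} (V_i^T b)^2 = |b|_2^2$. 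Finally, $|y|_\infty \le |y|_2$ for any $y \in \mathbb{R}^n$ gives (\ref{qxb-bound}).

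There is no real obstacle here; the only point that requires care is the identification $U_n \parallel \pi$, $V_n \parallel \mathbf{1}$, which has already been justified in the surrounding text. Everything else is a routine application of SVD and the Pythagorean identity for orthonormal bases.
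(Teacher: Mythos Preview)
Your proposal is correct and follows essentially the same approach as the paper: both exploit the SVD $Q = UDV^T$ together with $V_n \parallel \mathbf{1}$, $U_n \parallel \pi$ to reduce $Q^T x = b$ to the scalar equations $\sigma_i(U_i^T x) = V_i^T b$, and both derive the norm bound from orthonormality of the $U_i$ and $V_i$. The only cosmetic difference is that the paper first verifies the formula (\ref{qxb-sol}) by direct substitution and then argues it gives all solutions, whereas you diagonalize from the outset; the content is the same.
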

\begin{proof}
  \begin{enumerate}
    \item
      Assume $Q^T x=b$, we have $b^T\bm{1} = x^TQ\bm{1} = 0$. The sufficiency
      follows from the second conclusion.
    \item
      We directly verify that expression (\ref{qxb-sol}) satisfies the equation
      $Q^Tx = b$. $\forall a \in \mathbb{R}$, using $Q^T\pi = 0$, $Q = UDV^T$
      and $U_i$, $V_i$ are orthonormal, we have 
      \begin{align*}
	Q^Tx =& Q^T\Big(a\pi + \sum_{i = 1}^{n-1} \sigma_i^{-1} (V_i^Tb) U_i\Big)  \\
	=& \sum_{i = 1}^{n-1} \sigma_i^{-1} (V_i^Tb)VDU^T U_i \\
	=& \sum_{i = 1}^{n-1} (V_i^Tb)V_i =b\,.
      \end{align*}
      In the last equality, we have used $b^T V_n = b^T \bm{1} = 0$.

      On the contrary, suppose $x \in \mathbb{R}^n$ satisfies the equation $Q^Tx = b$. Since $U_i$ is
      orthonormal and $U_n \parallel \pi$, we can assume $x = a \pi + \sum\limits_{i=1}^{n-1} a_i U_i$.
      Substituting it into $Q^Tx=b$, we obtain $a_i = \sigma_i^{-1} (V_i^Tb)$, $1 \le i \le n-1$, 
      i.e. $x$ is given by (\ref{qxb-sol}).

      And we can estimate
      \begin{align}
      |x - a\pi|_2 = \Big|\sum_{i = 1}^{n-1} \sigma_i^{-1} (V_i^Tb) U_i\Big|_2
      = \Big(\sum_{i=1}^{n-1} \sigma_i^{-2} (V_i^Tb)^2\Big)^{\frac{1}{2}}
      \le \sigma_{n-1}^{-1} \Big(\sum_{i=1}^{n-1} (V_i^Tb)^2\Big)^{\frac{1}{2}}
     =\sigma_{n-1}^{-1} |b|_2\,,
   \end{align}
   i.e. (\ref{qxb-bound}) is proved.
  \end{enumerate}
\end{proof}

In the reversible case, we have $Q=Q^*$ and $\Pi
Q = Q^T\Pi$, which indicates that $\Pi^{\frac{1}{2}}Q\Pi^{-\frac{1}{2}}$ is
symmetric. Consider the eigenvalue decomposition
$\Pi^{\frac{1}{2}}Q\Pi^{-\frac{1}{2}} = UDU^{T}$ with $U^TU=UU^T=I$, $D
= \mbox{diag}\{\lambda_1, \lambda_2, \cdots, \lambda_{n}\}$ is a diagonal matrix
consisting of real eigenvalues. 
Then $Q=TDT^{-1}$, with $T= \Pi^{-\frac{1}{2}}U$. Denote $i$th column vector
of $T$ as $T_i$, i.e. $T= [T_1, T_2, \cdots, T_n]$,
then we have $QT_i = \lambda_i T_i, \langle T_i, T_j\rangle_{\pi} =
\delta_{ij}$. Therefore $T_i$ is the eigenvector of
$Q$ corresponding to eigenvalue $\lambda_i$ and $\{T_i\}_{1 \le i \le n}$
forms an orthonormal basis of $L^2_{\pi}(\Omega)$. 
For two functions $f,g$ on $\Omega$ written as $f = \sum\limits_{i=1}^n f_i T_i$, $g
= \sum\limits_{i=1}^n g_i T_i$, we have 
\begin{align*}
  \mathcal{E}(f,g) = -\langle f, Qg\rangle_\pi = -\sum_{i=1}^n \lambda_i f_i g_i \,.
\end{align*}
From (\ref{dirichlet-nonsym}), we could assume
$0 = \lambda_1 > \lambda_2 \ge \cdots \ge \lambda_{n}$. It is clear that the
Poincar\'e constant $\lambda = -\lambda_2 > 0$.
\section{Asymptotic expansion method : formal argument}
\label{app-sec-2}
Asymptotic expansion method has been widely 
used in studying dynamical systems, partial differential equations in certain limiting regime, see
\cite{nssa_weinan, Zhang_numericalstudy, papanicolau1978asymptotic, pavliotis2008multiscale}. 
In this section, we consider the Kolmogorov backward equation and various
constants studied in Section~\ref{sec-reversible}-\ref{sec-general} using this
method. First consider the equation 
\begin{align}
  \frac{d}{dt} \rho_t = & Q\rho_t = \big(\frac{1}{\epsilon} Q_0 + Q_1\big)
  \rho_t\,. \label{fp-ms-formal} 
\end{align}
 Assume we have the expansion
\begin{align}
\rho_t = \rho_{t,0} + \epsilon\rho_{t,1} + \epsilon^2 \rho_{t,2} + \cdots\,.
\label{rho-expan}
\end{align}
Substitute it into (\ref{fp-ms-formal}), we obtain
\begin{align}
  \frac{d\rho_{t,0}}{dt} 
  + \epsilon \frac{d\rho_{t,1}}{dt} 
  + \mathcal{O}(\epsilon^2)  = 
  \frac{Q_0 \rho_{t,0}}{\epsilon} + Q_0 \rho_{t,1} + Q_1\rho_{t,0} + \epsilon
  Q_0 \rho_{t,2} + \epsilon Q_1 \rho_{t,1} + \mathcal{O}(\epsilon^2)\,.
\end{align}
Collecting terms of order $\mathcal{O}(\frac{1}{\epsilon})$ and $\mathcal{O}(1)$ 
with respect to parameter $\epsilon$,
we obtain equations
\begin{align}
  \begin{split}
  Q_0\rho_{t,0}  =& \, 0 \,, \\
  \frac{d\rho_{t,0}}{dt} =&\, Q_0\rho_{t,1} + Q_1\rho_{t,0} \,.
  \end{split}
  \label{eps-terms}
\end{align}
Since $Q_0$ is a block diagonal matrix of form (\ref{q0-block}), the first
equation in (\ref{eps-terms}) can be written as $Q_{0,i}\, \rho_{t,0}(i,\cdot) =
0$, $1 \le i \le m$.
It follows from the irreducibility of each Markov chain $\mathcal{C}_i$ 
that $\rho_{t,0}$ is constant on each subset $X_i$. And we can assume $\rho_{t,0}(x) =
\bar{\rho}_t(i)$ for $x\in X_i$, where function $\bar{\rho}_t : \bar{X} \rightarrow
\mathbb{R}$. Then the second equation of (\ref{eps-terms}) can be written more explicitly as 
\begin{align}
  \frac{d\bar{\rho}_t}{dt} =& \sum_{x' \in X_i} Q_{0,i}(x,x') \rho_{t,1}(x')
   + \sum_{j \neq i} \sum_{y \in X_j} Q_1(x,y)
(\bar{\rho}_t(j) - \bar{\rho}_t(i)) \,,
\end{align}
where $1 \le i \le m$ and $x \in X_i$. Now we multiply both sides of the above
equation by $\pi_i(x)$ and sum up $x \in X_i$. Using $\pi_i^TQ_{0,i} = 0$ and
the definition $\bar{Q}$ in (\ref{q-bar}), we arrive at 
\begin{align}
  \frac{d\bar{\rho}_t}{dt} = \bar{Q} \bar{\rho}_t\,.
  \label{fp-bar-formal}
\end{align}
From expansion (\ref{rho-expan}), the above reasoning indicates the convergence of $\rho_t$ in
(\ref{fp-ms-formal}) to $\bar{\rho}_t$ in (\ref{fp-bar-formal}). See
Theorem~\ref{thm-0} in Section~\ref{sec-reversible}. 

The asymptotic behavior of Poincar\'e constant $\lambda_\epsilon$, 
logarithmic Sobolev constant $\alpha_\epsilon$ and modified logarithmic Sobolev constant
$\gamma_\epsilon$ can be studied as well.
Let $f^\epsilon$ be a function where the infimum in (\ref{poincare-const})
is achieved. Then standard variation method implies 
\begin{align}
  -\frac{Q + Q^*}{2} f^\epsilon = \lambda_\epsilon f^\epsilon\,,
\end{align}
with $|f^\epsilon|_{L^2(\pi^\epsilon)} = 1$.
Similarly, the minimizer of (\ref{log-sobolev-const}) satisfies 
\begin{align}
  -\frac{Q + Q^*}{2} f^\epsilon = \alpha_\epsilon f^\epsilon \ln
  (f^\epsilon)^2\,,
\end{align}
with $|f^\epsilon|_{L^2(\pi^\epsilon)} = 1$, while the minimizer of (\ref{mlsi-def})
satisfies 
\begin{align}
  -Q^*f^\epsilon - f^\epsilon Q\ln f^\epsilon = \gamma_\epsilon f^\epsilon \ln f^\epsilon\,,
  \label{gamma-euler}
\end{align}
with $\mathbf{E}_{\pi^\epsilon} f^\epsilon = 1$.
For simplicity, we only consider the modified logarithmic Sobolev constant
$\gamma_\epsilon$ using (\ref{gamma-euler}), since constants $\lambda_\epsilon$ and
$\alpha_\epsilon$ can be studied in a similar way. Assume we have the
expansion 
\begin{align}
  f^\epsilon = f_0 + \epsilon f_1 + \cdots\,,  \quad 
  \gamma_\epsilon = \gamma_0 + \epsilon \gamma_1 + \cdots\,.
\end{align}
Substituting it into (\ref{gamma-euler}), we have 
\begin{align*}
  &-\frac{Q_0^*f_0}{\epsilon} - Q_1^*f_0 - Q_0^* f_1 - \frac{f_0Q_0\ln f_0}{\epsilon}
  - f_0Q_1 \ln f_0 - f_1Q_0\ln f_0 - f_0Q_0 \Big(\frac{f_1}{f_0}\Big) +
  \mathcal{O}(\epsilon) \\
  =& \gamma_0 f_0 \ln f_0 + \mathcal{O}(\epsilon) \,.
\end{align*}
Collecting terms of order $\mathcal{O}(\frac{1}{\epsilon})$ and $\mathcal{O}(1)$ 
with respect to parameter $\epsilon$, we obtain equations
\begin{align}
  \begin{split}
  &Q_0^*f_0+f_0Q_0\ln f_0 = 0 \,,\\
&- Q_1^*f_0 - Q_0^* f_1  - f_0Q_1 \ln f_0 - f_1Q_0\ln f_0 - f_0Q_0 \Big(\frac{f_1}{f_0}\Big) 
  = \gamma_0 f_0 \ln f_0 \,.
\end{split}
\label{gamma-expan}
\end{align}
Now for each $1 \le i \le m$, we multiply both sides of the first equation of (\ref{gamma-expan}) by
$\pi_i(x)$ and sum up $x \in X_i$. Using $(Q_{0,i}^*)^T\pi_i = 0$, we can obtain
\begin{align*}
\mathcal{E}_i(f_0(i, \cdot), \ln f_0(i, \cdot)) = 0\,,
\end{align*}
where $\mathcal{E}_i$ is the Dirichlet form of Markov chain $\mathcal{C}_i$.
From Lemma~$2.7$ of \cite{log-sob-diaconis}, we know 
\begin{align*}
  \mathcal{E}_i(f_0^{\frac{1}{2}}(i, \cdot),f_0^{\frac{1}{2}}(i, \cdot))
  \le \frac{1}{2} \mathcal{E}_i(f_0(i, \cdot), \ln f_0(i, \cdot)) = 0\,.
\end{align*}
Since Markov chain $\mathcal{C}_i$ is irreducible, we can deduce that $f_0$ is
constant on each subset $X_i$, i.e. we have $f_0(x) = \bar{f}(i)$ when $x \in
X_i$, where $\bar{f} : \bar{X} \rightarrow \mathbb{R}$.  Now we multiply both
sides of the second equation in (\ref{gamma-expan}) by $\pi_i(x)$ and sum up
$x \in X_i$. Using the fact that $Q_0 \ln f_0 = 0$, $Q_{0,i}^T \pi_i =
(Q_{0,i}^*)^T \pi_i = 0$, we can deduce that
\begin{align}
  -\bar{Q}^* \bar{f} - \bar{f} \bar{Q} \ln \bar{f} = \gamma_0 \bar{f} \ln
  \bar{f}\,,
\end{align}
with $\mathbf{E}_w \bar{f} = 1$ (see Theorem~\ref{thm-pi-general}). 
Comparing to (\ref{gamma-euler}), this equation shows that function $\bar{f}$ is a minimizer of 
\begin{align}
  \frac{\bar{\mathcal{E}}(\bar{f}, \ln \bar{f})}{\mbox{Ent}_w \bar{f}}
%  ~\Big|~ f : \bar{X} \rightarrow \mathbb{R}, \, f > 0
  \label{mlsi-bar}
\end{align}
and takes value $\gamma_0$. 
Using the fact that $\bar{\gamma}$ is the infimum of (\ref{mlsi-bar}) and 
the fact $\lim\limits_{\epsilon \rightarrow 0} \gamma_\epsilon \le
\bar{\gamma}$ (see Theorem~\ref{thm-constants-general}), we have
\begin{align*}
  \bar{\gamma} \le \gamma_0 = \lim_{\epsilon \rightarrow 0} \gamma_\epsilon
  \le \bar{\gamma}\,.
\end{align*}
Therefore we conclude that $\lim\limits_{\epsilon \rightarrow 0} \gamma_\epsilon =
\bar{\gamma}$.

\bibliographystyle{siam}
\bibliography{reference}
\end{document}